     \def\section{\@startsection{section}{1}%
     \z@{.7\linespacing\@plus\linespacing}{.5\linespacing}%
     {\bfseries
     \centering
     }}
     \def\@secnumfont{\bfseries}
\renewcommand{\thesubfigure}{\thefigure.\arabic{subfigure}}
\renewcommand{\p@subfigure}{}
\renewcommand{\@thesubfigure}{\thesubfigure:\hskip\subfiglabelskip}
\newcommand{\doublewedge}{\big@doubleop{\wedge}}
\newcommand{\big@doubleop}[1]{%
  \DOTSB\mathop{\mathpalette\big@doubleop@aux{#1}}\slimits@
}
\newcommand\big@doubleop@aux[2]{%
  \sbox\z@{$\m@th#1#2$}%
  \makebox[1.35\wd\z@][s]{$\m@th#1#2\hss#2$}%
}
\newcommand{\norm}[1]{\left\|#1\right\|}  
\newcommand{\cl}{\mbox{cl}}
\newcommand{\Int}{\mbox{int}}
\newcommand{\bdy}{\mbox{bdy}}
\newcommand{\Nrv}{\mbox{Nrv}}
\newcommand{\near}{\delta} 
\newcommand{\dnear}{\delta_{\Phi}} 
\newcommand{\dcap}{\mathop{\cap}\limits_{\Phi}} 
\newcommand{\sh}{\mbox{sh}}
\newcommand{\cx}{\mbox{cx}}
\newcommand{\dfar}{{\not\delta}_{\Phi}} 
\newcommand{\sn}{\mathop{\delta}\limits^{\doublewedge}} 
\newcommand{\snd}{\mathop{\delta_{_{\Phi}}}\limits^{\doublewedge}} 
\newtheorem{example}{Example}
\newtheorem{remark}{Remark}
\newtheorem{definition}{Definition}
\newtheorem{lemma}{Lemma}
\newtheorem{theorem}{Theorem}
\newtheorem{proposition}{Proposition}
\newtheorem{corollary}{Corollary}
\definecolor{light}{gray}{0.80}
\begin{document}

\title[Proximal Planar Shapes]{Proximal Planar Shapes.  Correspondence\\ between Shapes and Nerve Complexes}

\author[James F. Peters]{James F. Peters}
\address{James F. PETERS: 
Computational Intelligence Laboratory,
University of Manitoba, WPG, MB, R3T 5V6, Canada and
Department of Mathematics, Faculty of Arts and Sciences, Ad\.{i}yaman University, 02040 Ad\.{i}yaman, Turkey}
\thanks{The research has been supported by the Natural Sciences \&
Engineering Research Council of Canada (NSERC) discovery grant 185986 
and Instituto Nazionale di Alta Matematica (INdAM) Francesco Severi, Gruppo Nazionale per le Strutture Algebriche, Geometriche e Loro Applicazioni grant 9 920160 000362, n.prot U 2016/000036.}
\email{James.Peters3@umanitoba.ca}

\subjclass[2010]{Primary 54E05 (Proximity); Secondary 68U05 (Computational Geometry)}

\date{}

\dedicatory{Dedicated to P. Alexandroff and Som Naimpally}

\begin{abstract}
This article considers proximal planar shapes in terms of the proximity of shape nerves and shape nerve complexes.   A shape nerve is collection of 2-simplexes with nonempty intersection on a triangulated shape space.   A planar shape is a shape nerve complex, which is a collection of shape nerves that have nonempty intersection.   A main result in this paper is the homotopy equivalence of a planar shape nerve complex and the  union of its nerve sub-complexes.
\end{abstract}
\keywords{Boundary, Interior, Nerve, Proximity, Shape Nerve Complex}

\maketitle

\section{Introduction}
This article introduces shapes that are collections of nerve complexes restricted to the Euclidean plane.


\setlength{\intextsep}{0pt}
\begin{wrapfigure}[11]{R}{0.30\textwidth}
\begin{minipage}{3.8 cm}
\centering
\begin{pspicture}
(1.0,-0.2)(3.0,3.8)
\rput(1.8,0.25){\psKangaroo[fillcolor=gray!30,opacity=0.5]{3.5}}
\psdot[dotstyle=o,dotsize=0.11,linecolor=black,fillcolor=black](2.3,1.85)
\rput(3.5,3.2){$\boldsymbol{\sh A}$}
\rput(2.52,1.95){$\boldsymbol{p}$}
\end{pspicture}
\caption[]{Shape}
\label{fig:spokes}
\end{minipage}
\end{wrapfigure}  

A \emph{\bf shape} $A$ (denoted by $\sh A$) is a finite region of the Euclidean plane bounded by a simple closed curve with a nonempty interior.  A shape satisfies the Jordan curve theorem.  Recall that a closed curve is \emph{simple}, provided the curve has no self-intersections (loops).

\begin{theorem} {\rm [Jordan Curve Theorem~\cite{Jordan1893coursAnalyse}]}.\\
A simple closed curve lying on the plane divides the\\ 
plane into two regions and forms their common boundary.
\end{theorem}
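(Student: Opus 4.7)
The plan is to prove the theorem in two stages: first establish the statement for simple closed polygonal curves by a combinatorial parity argument, and then extend it to arbitrary simple closed curves by polygonal approximation. Write $J$ for the given simple closed curve and $\mathbb{R}^2 \setminus J$ for its complement. The goal is to produce exactly two connected components of this complement and to verify that each of them has $J$ as its topological boundary.

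For the polygonal case, I would fix a generic direction in the plane (one not parallel to any edge of the polygon and missing every vertex), and for each point $p \notin J$ count the parity of transverse intersections of the ray from $p$ in that direction with $J$. Points of even parity form one candidate region (the ``outside''); points of odd parity form the other (the ``inside''). The key combinatorial lemma is that this parity is locally constant on $\mathbb{R}^2 \setminus J$: it only changes when $p$ crosses an edge of the polygon, which is exactly the curve $J$ itself. This yields at least two components; one then establishes that each parity class is connected by exhibiting explicit piecewise-linear paths that avoid $J$, using the fact that a polygon has only finitely many edges.

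To pass from polygons to general simple closed curves, I would take a sequence of polygonal approximations $J_n \to J$ in the Hausdorff metric, each $J_n$ itself simple, and define the ``inside'' of $J$ as the set of points $p$ with odd parity (equivalently, winding number $\pm 1$) relative to $J_n$ for all sufficiently large $n$. One verifies that this definition is independent of the chosen approximating sequence and that it partitions $\mathbb{R}^2 \setminus J$ into two nonempty open sets $U$ and $V$, each path-connected by a limiting argument on the polygonal paths witnessing connectedness of the $J_n$-regions. Finally, to see that $J$ is the common boundary, note that any $q \in J$ admits arbitrarily small disks in which $J$ locally separates points of $U$ from points of $V$, so $q$ is a limit point of both.

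The main obstacle is the approximation step: for very wild simple closed curves (non-rectifiable, with cusps, or of Koch type) one must choose the polygonal approximants $J_n$ to be themselves simple and to have inside/outside regions that behave continuously in the limit. A purely metric argument is delicate here, which is why the standard rigorous proof invokes a topological invariant (winding number, degree theory, or \v{C}ech cohomology via Alexander duality) to define the partition intrinsically rather than through the approximants. Since such tools lie outside the nerve-complex framework developed in this paper, I would treat the polygonal case in detail as the form used subsequently, and cite the classical proof of the general version.
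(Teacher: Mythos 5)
The first thing to note is that the paper does not actually prove this theorem: its ``proof'' consists entirely of pointers to the literature (Veblen's first complete proof, Maehara's argument via the Brouwer fixed point theorem, and Munkres' textbook treatment). Your closing move --- treating the polygonal case in detail and citing the classical proof for the general version --- therefore ends up in essentially the same place as the paper, and your polygonal parity argument (generic ray direction, parity locally constant off $J$, explicit piecewise-linear paths witnessing connectedness of each parity class) is the standard and correct way to handle that case.

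However, taken as an actual proof of the full statement, the middle stage has genuine gaps. First, the Hausdorff-approximation definition of the inside is problematic: for $p$ near $J$ the parity with respect to $J_n$ need not stabilize in any way you control, independence from the approximating sequence is asserted rather than argued, and connectedness of $U$ and $V$ does not follow from ``a limiting argument on the polygonal paths,'' since the inside regions of the $J_n$ need not converge to the inside of $J$ (thin channels can pinch off or open up in the limit); nothing in the sketch rules out the complement having more than two components. Second, the common-boundary claim is obtained by assuming that every $q\in J$ has arbitrarily small disks in which $J$ separates points of $U$ from points of $V$; this local two-sidedness is essentially equivalent to the hard lemma that no proper arc of $J$ separates the plane, which is precisely where the classical proofs invoke Brouwer's fixed point theorem, degree theory, or Alexander duality. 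You flag this difficulty yourself, and your remedy --- cite the classical proof --- is legitimate and matches the paper's own treatment; but the reader should be clear that the approximation scheme as written does not close these gaps, so the general case rests on the citation, not on the sketch.
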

\begin{proof}
For the first complete proof, see O. Veblen~\cite{Veblen1905TAMStheoryOFPlaneCurves}.  For a simplified proof via the Brouwer Fixed Point Theorem, see R. Maehara~\cite{Maehara1984AMMJordanCurvedTheoremViaBrouwerFixedPointTheorem}.  For an elaborate proof, see J.R. Mundres~\cite[\S 63, 390-391, Theorem 63.4]{Munkres2000}.
\end{proof}

\begin{example}
A shape divides the plane into two regions, providing a common boundary for the two regions\footnote{Many thanks to A. Di Concilio and C. Guadagni for pointing this out.}.  For example, the kangaroo in Fig.~\ref{fig:spokes} is a shape $\sh A$ that includes a point $p$ in its interior.
\qquad \textcolor{blue}{\Squaresteel}
\end{example}

Including both the interior and boundary in a shape reflects the view of shapes by P. Alexandroff and H. Hopf~\cite[\S VII.3, p. 311]{AlexandroffHopf1935Topologie}, who called attention to the importance of both the interior and exterior properties of a shape.

\setlength{\intextsep}{0pt}
\begin{wrapfigure}[9]{R}{0.30\textwidth}
\begin{minipage}{3.8 cm}
\centering
	 \begin{pspicture}
(0.5,-1.5)(4.0,2.0)
\rput(2.0,-1.25){\psKangaroo[fillcolor=gray!30,opacity=0.5]{3.5}}
\psline[linecolor=blue](0.5,-0.2)(0.8,-0.8)
\psline[linecolor=blue](0.5,-0.2)(1.0,0.5)(2.0,0.80)(2.8,2.1)%
(3.5,2.1)(4.0,1.15)(3.7,0.90)(3.25,0.60)(2.0,0.80)
\psline[linecolor=blue](4.0,0.00)(4.0,1.15)%
(3.7,0.90)(3.25,0.60)(2.0,0.80)(3.75,-1.50)(1.0,0.5)
\psline[linecolor=blue](4.0,0.00)(3.7,0.90)%
\psline[linecolor=blue](4.0,0.00)(3.25,0.60)%
\psline[linecolor=blue](4.0,0.00)(2.0,0.80)%
\psline[linecolor=blue](4.0,0.00)(3.75,-1.50)%
\psline[linecolor=blue](1.0,0.5)(0.8,-0.8)\psline[linecolor=blue](1.0,0.5)(2.0,-1.25)%
\psline[linecolor=blue](2.0,-1.25)(0.8,-0.8)\psline[linecolor=blue](2.0,-1.25)(3.75,-1.50)%
\psline[linecolor=blue](3.25,0.60)(3.5,2.1)\psline[linecolor=blue](3.25,0.60)(2.8,2.1)%
\psline[linecolor=blue](3.25,0.60)(3.5,2.1)\psline[linecolor=blue](3.7,0.90)(3.5,2.1)%
\psdots[dotstyle=o, linewidth=1.2pt,linecolor = black, fillcolor = black]%
(0.5,-0.2)(0.8,-0.8)(1.0,0.5)(2.8,2.1)(3.5,2.1)(4.0,1.15)(3.7,0.90)(3.25,0.60)%
\psdots[dotstyle=o, linewidth=1.2pt,linecolor = black, fillcolor = black]%
(4.0,0.00)(3.75,-1.50)(2.0,-1.25)(2.0,0.80)%
\psdots[dotstyle=o, linewidth=1.3pt,linecolor = black, fillcolor = black]%
(3.25,0.60)
\end{pspicture}
\caption[]{$\boldsymbol{\sh A\subset \left\{\bigtriangleup\right\}}$}
\label{fig:kangarooComplex}
\end{minipage}
\end{wrapfigure}   

In this paper, a proximal computational topology approach to shapes is introduced.  
Computational topology combines geometry, topology and algorithms in the study of topological structures, introduced by H. Edelsbrunner and J.L. Harer~\cite{Edelsbrunner1999}.  
K. Borsuk was one of the first to suggest studying sequences of plane shapes in his theory of shapes~\cite{Borsuk1970theoryOfShape}.  Borsuk also observed that every polytope can be decomposed into a finite sum of elementary simplexes.  A \emph{polytope} is the intersection of finitely many closed half spaces~\cite{Ziegle2007polytopes}.

\setlength{\intextsep}{0pt}
\begin{wrapfigure}[10]{R}{0.30\textwidth}
\begin{minipage}{3.8 cm}
\centering
	 \begin{pspicture}
(0.5,-1.8)(4.0,2.5)
\psframe[linewidth=1pt,framearc=.3](0.3,-1.6)(4.3,2.3)
\rput(0.8,2.0){\large $\boldsymbol{K}$}
\rput(2.1,1.3){$\boldsymbol{\Nrv K(p)}$}
\rput(2.0,-1.25){\psKangaroo[fillcolor=gray!30,opacity=0.5]{3.5}}
\psline[linecolor=blue](0.5,-0.2)(0.8,-0.8)
\psline[linecolor=blue](0.5,-0.2)(1.0,0.5)(2.0,0.80)(2.8,2.1)%
(3.5,2.1)(4.0,1.15)(3.7,0.90)(3.25,0.60)(2.0,0.80)
\psline[linecolor=blue](4.0,0.00)(4.0,1.15)%
(3.7,0.90)(3.25,0.60)(2.0,0.80)(3.75,-1.50)(1.0,0.5)
\psline[linecolor=blue](4.0,0.00)(3.7,0.90)%
\psline[linecolor=blue](4.0,0.00)(3.25,0.60)%
\psline[linecolor=blue](4.0,0.00)(2.0,0.80)%
\psline[linecolor=blue](4.0,0.00)(3.75,-1.50)%
\psline[linecolor=blue](1.0,0.5)(0.8,-0.8)\psline[linecolor=blue](1.0,0.5)(2.0,-1.25)%
\psline[linecolor=blue](2.0,-1.25)(0.8,-0.8)\psline[linecolor=blue](2.0,-1.25)(3.75,-1.50)%
\psline[linecolor=blue](3.25,0.60)(3.5,2.1)\psline[linecolor=blue](3.25,0.60)(2.8,2.1)%
\psline[linecolor=blue](3.25,0.60)(3.5,2.1)\psline[linecolor=blue](3.7,0.90)(3.5,2.1)%
\psline*[linecolor=gray!40,opacity=0.5](3.25,0.60)(3.5,2.1)(2.8,2.1)%
\psline*[linecolor=gray!40,opacity=0.5](3.25,0.60)(3.7,0.90)(3.5,2.1)%
\psline*[linecolor=gray!40,opacity=0.5](3.25,0.60)(3.7,0.90)(4.0,0.00)%
\psline*[linecolor=gray!40,opacity=0.5](3.25,0.60)(2.0,0.80)(4.0,0.00)%
\psline*[linecolor=gray!40,opacity=0.5](3.25,0.60)(2.0,0.80)(2.8,2.1)%
\psdots[dotstyle=o, linewidth=1.2pt,linecolor = black, fillcolor = black]%
(0.5,-0.2)(0.8,-0.8)(1.0,0.5)(2.8,2.1)(3.5,2.1)(4.0,1.15)(3.7,0.90)(3.25,0.60)%
\psdots[dotstyle=o, linewidth=1.2pt,linecolor = black, fillcolor = black]%
(4.0,0.00)(3.75,-1.50)(2.0,-1.25)(2.0,0.80)%
\psdots[dotstyle=o, linewidth=1.2pt,linecolor = black, fillcolor = white]%
(3.25,0.60)
\rput(3.5,0.5){$\boldsymbol{p}$}
\end{pspicture}
\caption[]{$\boldsymbol{\Nrv K(p)}$}
\label{fig:kangarooNerves}
\end{minipage}
\end{wrapfigure}   

This leads to a simplicial complex $K$ (denoted by $\cx K$) covered by 2-simplexes $\Delta_1,\dots,\Delta_n$ (filled triangles) such that the nerve of the decomposition is the same as $K$~\cite{Borsuk1948FMsimplexes}.  Briefly, a \emph{geometric simplicial complex} (denoted by $\Delta(S)$ or simply by $\Delta$) is the convex hull of a set of vertices $S$, {\em i.e.}, the smallest convex set containing $S$.  Geometric simplexes in this paper are restricted to vertices (0-simplexes), line segments (1-simplexes) and filled triangles (2-simplexes) in the Euclidean plane, since our main interest is in the extraction of features of simplexes superimposed on planar digital images.  

A collection $X$ of simplexes that cover a planar\\ 
bounded region is called a \emph{simplicial complex}\\ 
(briefly, \emph{complex}).
The \emph{decomposition} of a finite bounded plane surface region into triangles (surface triangulation) is a covering of the surface region with the vertices, edges and interiors of non-overlapping filled
triangles.  

\begin{example}
A partial triangulation of a kangaroo shape $\sh A$ is given in Fig.~\ref{fig:kangarooComplex}, forming a shape complex in which $\sh A \subset \left\{\bigtriangleup\right\}$ (set of triangles in triangulation).  A more complete triangulation of this shape would include more of the boundary and interior points as triangle vertices.
\qquad \textcolor{blue}{\Squaresteel}
\end{example}

Decomposition triangles are \emph{non-overlapping}, provided the interiors of the triangles are disjoint, {\em i.e.}, the intersection of the triangle interiors is empty~\cite{Meisters1975AMMnonOverlapping}.  Let $X$ be a finite plane surface region, $T$ a set of filled triangles.  Surface $X$ is covered by $T$, provided $X\subseteq T$.  In other words, a \emph{triangulation} of a surface is decomposition of the surface. Each triangulation of a surface region results in a finite system of triangles called a \emph{geometrical complex} (briefly, \emph{complex})~\cite[\S 14]{Alexandroff1932elementaryConcepts}.  A 2-dimensional complex is a finite collection of filled triangles that cover a finite plane surface region.
 

\setlength{\intextsep}{0pt}
\begin{wrapfigure}[11]{R}{0.30\textwidth}
\begin{minipage}{3.8 cm}
\centering
\begin{pspicture}
(1.0,-0.2)(4.0,3.8)
\providecommand{\PstPolygonNode}{%
 \psdots[dotstyle=o,dotsize=0.08,linecolor=blue,fillcolor=black](1;\INode)
 \psline(0.95;\INode)}
\PstPolygon[unit=1.75,PolyNbSides=3,fillstyle=solid,fillcolor=lightgray]
\psdot[dotstyle=o,dotsize=0.18,linecolor=black,fillcolor=white](-1.75,1.75)
\rput(-1.65,1.53){$\boldsymbol{p}$}
\end{pspicture}
\caption[]{Complex}
\label{fig:nucleus}
\end{minipage}
\end{wrapfigure}  

An important form of simplicial complex is a collection of simplexes called a nerve (see, {\em e.g.}, Fig.~\ref{fig:kangarooNerves}).  A planar complex $K$ is a \emph{nerve}, provided the simplexes in $K$ have nonempty intersection equal to a vertex (called the nucleus of the nerve).   A nerve of a complex $K$ (denoted by $\Nrv K$) is a collection of 2-simplexes $\bigtriangleup$ in the triangulation of a plane region, defined by\\
\vspace{2mm}

\qquad $\Nrv K = \left\{\bigtriangleup\in K: \bigcap \bigtriangleup\neq \emptyset\right\}\ \mbox{(Nerve complex)}.$\\
\vspace{2mm}

Let $K(p)\subset K$ denote a collection of 2-simplexes that includes a vertex $p$.  The \emph{nucleus} $p$ of a nerve complex (denoted by $\Nrv K(p)$) is a vertex $p$ common to the 2-simplexes in $K(p)$.    For simplicity, we also write  $\Nrv K$ (omitting the nucleus $p$).

\begin{example}
The collection of filled triangles in Fig.~\ref{fig:nucleus} is an example of a nerve complex $\Nrv K(p)$ with a nucleus $p$.  That is, $\Nrv K(p)$ is a collection of filled triangles that have vertex $p$ in common.  In effect, for vertex $p$ in shape $\sh A$, a shape nerve
$\Nrv K(p)$ is defined by
\[
\Nrv K(p) = \left\{\bigtriangleup\in K, p\in \sh A: \bigcap \bigtriangleup = p\right\}\ \mbox{(Shape Nerve)}.
\]
A sample shape nerve $\Nrv K(p)$ is shown in Fig.~\ref{fig:kangarooNerves}.
\qquad \textcolor{blue}{\Squaresteel}
\end{example}

Triangulation of point clouds in the plane provides a straightforward basis for the study of shapes covered by nerve complexes.

\begin{example}
Sample overlapping nerve complexes on a kangaroo shape are shown the highlighted regions in Fig.~\ref{fig:kangarooNerves}.  For simplicity, only a partial triangulation of the kangaroo shape is shown.  Some vertices lie outside the shape, reflect the common occurrence of shapes that lie in the interior a bounded plane region. The nerve complexes in Fig.~\ref{fig:kangarooNerves} are maximal, since each one has the highest number of triangles, compared with all other nerve complexes in the triangulation, forming a shape complex.  A more complete triangulation of this shape would include all of the boundary and interior vertices.
\qquad \textcolor{blue}{\Squaresteel}
\end{example}

\setlength{\intextsep}{0pt}
\begin{wrapfigure}[11]{R}{0.30\textwidth}
\begin{minipage}{3.8 cm}
	 \begin{pspicture}
(0.5,-1.5)(4.0,2.0)
\psframe[linewidth=1pt,framearc=.3](0.3,-1.6)(4.3,2.6)
\rput(0.8,2.3){\large $\boldsymbol{K}$}
\rput(3.3,2.3){\footnotesize $\boldsymbol{\Nrv_1 K(p)}$}
\rput(2.0,-1.25){\psKangaroo[fillcolor=gray!30,opacity=0.5]{3.5}}
\psline[linecolor=blue](0.5,-0.2)(0.8,-0.8)
\psline[linecolor=blue](0.5,-0.2)(1.0,0.5)(2.0,0.80)(2.8,2.1)%
(3.5,2.1)(4.0,1.15)(3.7,0.90)(3.25,0.60)(2.0,0.80)
\psline[linecolor=blue](4.0,0.00)(4.0,1.15)%
(3.7,0.90)(3.25,0.60)(2.0,0.80)(3.75,-1.50)(1.0,0.5)
\psline[linecolor=blue](4.0,0.00)(3.7,0.90)%
\psline[linecolor=blue](4.0,0.00)(3.25,0.60)%
\psline[linecolor=blue](4.0,0.00)(2.0,0.80)%
\psline[linecolor=blue](4.0,0.00)(3.75,-1.50)%
\psline[linecolor=blue](1.0,0.5)(0.8,-0.8)\psline[linecolor=blue](1.0,0.5)(2.0,-1.25)%
\psline[linecolor=blue](2.0,-1.25)(0.8,-0.8)\psline[linecolor=blue](2.0,-1.25)(3.75,-1.50)%
\psline[linecolor=blue](3.25,0.60)(3.5,2.1)\psline[linecolor=blue](3.25,0.60)(2.8,2.1)%
\psline[linecolor=blue](3.25,0.60)(3.5,2.1)\psline[linecolor=blue](3.7,0.90)(3.5,2.1)%
\psline*[linecolor=gray!30,opacity=0.5](3.25,0.60)(3.5,2.1)(2.8,2.1)%
\psline*[linecolor=gray!30,opacity=0.5](3.25,0.60)(3.7,0.90)(3.5,2.1)%
\psline*[linecolor=gray!30,opacity=0.5](3.25,0.60)(3.7,0.90)(4.0,0.00)%
\psline*[linecolor=gray!30,opacity=0.5](3.25,0.60)(2.0,0.80)(4.0,0.00)%
\psline*[linecolor=gray!30,opacity=0.5](3.25,0.60)(2.0,0.80)(2.8,2.1)%
\psline*[linecolor=gray!80,opacity=0.5](4.0,0.00)(3.25,0.60)(2.0,0.80)%
\psline*[linecolor=gray!80,opacity=0.5](4.0,0.00)(2.0,0.80)(3.75,-1.50)%
\psline*[linecolor=gray!80,opacity=0.5](2.0,0.80)(3.75,-1.50)(1.0,0.5)%
\psline*[linecolor=gray!80,opacity=0.5](2.0,0.80)(3.25,0.60)(2.8,2.1)%
\rput(1.7,0.95){\textcolor{black}{$\boldsymbol{p'}$}}
\rput(3.1,-0.1){\footnotesize $\boldsymbol{\Nrv_2 K(p')}$}
\psdots[dotstyle=o, linewidth=1.2pt,linecolor = black, fillcolor = black]%
(0.5,-0.2)(0.8,-0.8)(1.0,0.5)(2.8,2.1)(3.5,2.1)(4.0,1.15)(3.7,0.90)(3.25,0.60)%
\psdots[dotstyle=o, linewidth=1.2pt,linecolor = black, fillcolor = black]%
(4.0,0.00)(3.75,-1.50)(2.0,-1.25)(2.0,0.80)%
\psdots[dotstyle=o, linewidth=1.5pt,linecolor = black, fillcolor = white]%
(3.25,0.60)
\psdots[dotstyle=o, linewidth=1.5pt,linecolor = black, fillcolor = white]%
(2.0,0.80)
\rput(3.5,0.5){$\boldsymbol{p}$}
\end{pspicture}
	\caption{\footnotesize $\boldsymbol{\sh A\left(\cx\Nrv\right)}$}
	\label{fig:shapeNerve}
\end{minipage}
\end{wrapfigure}  

The study of nerves was introduced by P. Alexandroff~\cite[\S 33, p. 39]{Alexandroff1932elementaryConcepts},\cite{Alexandroff1928dimensionsbegriff}, elaborated by  P. Alexandroff and H. Hopf~\cite[\S 2.1]{AlexandroffHopf1935Topologie}, K. Borsuk~\cite{Borsuk1948FMsimplexes}, J. Leray~\cite{Leray1946homology}, and a number of others such as M. Adamaszek et al.~\cite{Adamaszek2014arXivNerveComplexes}, E.C. de Verdi\`{e}re et al.~\cite{Colin2012multinerves}, H. Edelsbrunner and J.L. Harer~\cite{Edelsbrunner1999}, and more recently by M. Adamaszek, H. Adams, F. Frick, C. Peterson and C. Previte-Johnson~\cite{Adamaszek2016DCGnerveComplexesOfCircularArcs}.  In this paper, a shape nerve-based extension of the Edelsbrunner-Harer Nerve Theorem~\cite[\S III.2, p. 59]{Edelsbrunner1999} is given (see Theorem~\ref{EHnerve}).   A shape nerve complex $\Nrv(\Nrv K(p), p\in \sh A)$ is a collection of shape nerves with nonempty intersection defined by
\begin{align*}
\sh A &= \Nrv\left\{\Nrv K(p):p\in\sh A\subseteq K\ \mbox{and}\ K(p)\subset K\right\}\\
          &=  \left\{\Nrv K(p):\mathop{\bigcap}\limits_{\substack{%
																																											 p\in \sh A,\\
																																											 K(p)\subset K
																																											}}
																																											\Nrv K(p)\neq \emptyset\right\}\ \mbox{(Shape nerve complex)}.
\end{align*}

\begin{example}{\bf Sample Shape Nerve Complex}.\\
A pair of overlapping shape nerves $\Nrv_1 K(p),\Nrv_2 K(q)$ with nuclei $p,p'\in \sh A$ on a shape $\sh A$ are shown in Fig.~\ref{fig:shapeNerve}.    In this case, the shape nerve complex (denoted by $\sh A\left(\cx\Nrv\right)$  in Fig.~\ref{fig:shapeNerve})  equals $\left\{\Nrv_1 K(p),\Nrv_2 K(p')\right\}$, since $\Nrv_1 B_r(p)\cap\Nrv_2 B_{r'}(q)\neq \emptyset$.
\mbox{\qquad \textcolor{blue}{\Squaresteel}}
\end{example}

From a computational topology perspective, homotopy types and homotopic equivalence are introduced in~\cite[\S III.2]{Edelsbrunner1999} and lead to significant results in the theory of nerve complexes.  Let $f,g:X\longrightarrow Y$ be two continuous maps.  A \emph{homotopy} between $f$ and $g$ is a continuous map $H:X\times[0,1]\longrightarrow Y$ so that $H(x,0) = f(x)$ and $H(x,1) = g(x)$.  The sets $X$ and $Y$ are \emph{homotopy equivalent}, provided there are continuous maps $f: X\longrightarrow Y$ and $g:Y\longrightarrow X$ such that $g\circ f \simeq \mbox{id}_X$ and $f\circ g \simeq \mbox{id}_Y$.  This yields an equivalence relation $X\simeq Y$.  In addition, $X$ and $Y$ have the same \emph{homotopy type}, provided $X$ and $Y$ are homotopy equivalent.   A main result in this paper is Theorem~\ref{thm:nerveSpokeTheorem}.
%


\begin{theorem}\label{thm:nerveSpokeTheorem}
If $\sh A$ is a nerve complex in the Euclidean plane, $\sh A$ is homotopy equivalent to the union of its nerve sub-complexes.
\end{theorem}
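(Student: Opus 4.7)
The plan is to view the shape nerve complex $\sh A = \Nrv\{\Nrv_i K(p_i):p_i\in\sh A\}$ as the abstract nerve of a cover of the geometric union $\bigcup_i\Nrv_i K(p_i)$ by the shape sub-nerves themselves, and then invoke the Edelsbrunner--Harer Nerve Theorem (Theorem~\ref{EHnerve}) to obtain the homotopy equivalence. This reduces the problem to verifying the \emph{good cover} hypothesis: every nonempty finite intersection of members of the cover is contractible.

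For a single sub-nerve, $\Nrv_i K(p_i)$ is by construction a finite union of filled 2-simplices all sharing the nucleus $p_i$ as a common vertex, so the straight-line homotopy $H(x,t)=(1-t)x+t\,p_i$ is a strong deformation retraction onto $p_i$: for any $x$ in the sub-nerve, the segment $[x,p_i]$ lies inside whichever 2-simplex contains both endpoints. For a multi-set intersection $\Nrv_{i_1}K(p_{i_1})\cap\cdots\cap\Nrv_{i_k}K(p_{i_k})$, a 2-simplex $\bigtriangleup$ lies in it exactly when each nucleus $p_{i_j}$ is one of its three vertices, forcing $k\le 3$. For $k=2$ the intersection is the (at most two) 2-simplices that share the edge $p_{i_1}p_{i_2}$, which is star-shaped from any interior point of that edge; for $k=3$ the intersection is the single filled triangle on $\{p_{i_1},p_{i_2},p_{i_3}\}$, which is convex; for $k\ge 4$ it is empty. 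In every nonempty case the intersection is contractible, so the cover is good and the Nerve Theorem yields $\sh A\simeq \bigcup_i\Nrv_i K(p_i)$.

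The main obstacle I anticipate is matching the argument to the precise form of the nerve theorem cited in the paper, namely confirming that the variant valid for closed polygonal pieces (rather than open convex sets) applies to this planar setting, and making sure that the combinatorial description of the intersections above really captures every 2-simplex shared among several sub-nerves in one triangulation of $\sh A$. Both concerns reduce to a short case analysis that is made tractable by the fact that a 2-simplex has only three vertices, so that no more than three sub-nerves can share a triangle, and the shared configurations (vertex, edge-fan, or single triangle) are each visibly star-shaped.
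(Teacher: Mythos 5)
Your overall strategy---covering the union by the sub-nerves themselves and invoking the Edelsbrunner--Harer theorem---is the same route the paper takes (Theorem~\ref{thm:nerveSpokeTheorem} is obtained there as a corollary of Lemma~\ref{thm:1spokeHomotopy}, which applies Theorem~\ref{EHnerve} directly after asserting that the shape nerves are ``closed, convex sets''), and your instinct that the convexity hypothesis must be replaced by a good-cover hypothesis is sound, since a sub-nerve $\Nrv K(p)$ is the closed star of its nucleus and is star-shaped but not convex. The genuine gap is in your verification of the good-cover condition. As point sets, the intersection $\Nrv K(p)\cap \Nrv K(p')$ is \emph{not} just the union of the 2-simplexes having both $p$ and $p'$ among their vertices: a triangle at $p$ and a triangle at $p'$ can meet along an edge or at a vertex (a common neighbour of the two nuclei) even though no single triangle contains both nuclei. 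Concretely, triangulate the quadrilateral shape with corners $p,a,p',b$ using two interior vertices $u,v$ placed on the segment from $p$ to $p'$, with triangles $(p,a,u)$, $(p,u,b)$, $(a,u,v)$, $(u,v,b)$, $(a,v,p')$, $(v,b,p')$. Then $\Nrv K(p)\cap \Nrv K(p')=\{a\}\cup\{b\}$, which is nonempty but disconnected, hence not contractible; similarly, if $c$ is an interior vertex of degree $d$, the stars of all $d$ neighbours of $c$ meet at $c$, so $k$-fold intersections with $k\geq 4$ need not be empty. So the closed-star cover is not in general a good cover, your three-case description of the intersections (edge-fan, single triangle, empty) is incorrect, and the good-cover nerve theorem cannot be applied as claimed.

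The standard repair, if you want to keep your approach, is to cover $\bigcup_i \Nrv_i K(p_i)$ by the \emph{open} stars of the nuclei rather than the closed sub-nerves: a finite intersection of open vertex stars is empty unless the nuclei span a simplex of the triangulation, in which case it is the open star of that simplex and is contractible, so the open-star cover is good and its nerve is isomorphic to the triangulated shape itself, from which the homotopy equivalence with the union follows. One must then still relate that nerve to the collection $\left\{\Nrv_i K(p_i)\right\}$ used in the statement. As written, the missing lower-dimensional pieces in your description of the intersections, and the resulting failure of contractibility, are a real obstruction rather than bookkeeping, so the proposal does not yet establish the theorem.
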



A practical application of shape complexes is the study of the characteristics of surface shapes.  Such shapes can be quite complex when they are found in digital images.  By covering an image object shape with nerve complexes, we simplify the problem of describing object shapes, thanks to a knowledge of geometric features of nerve complexes.  
The problem of classifying shapes is simplified by extracting regional feature values from the nerves complexes that cover each shape.  This is essentially a point-free geometry approach introduced by ~\cite{Peters2016arXivProximalPhysicalGeometry}.

\section{Preliminaries}
This section briefly introduces three basic types of proximities, namely, traditional \emph{spatial proximity} $\near$ and the more recent \emph{strong proximity} $\sn$ and \emph{descriptive proximity} $\snd$ in the study of computational proximity~\cite{Peters2016CP}.

Let $A$ be a nonempty set of vertices, $p\in A$ in a finite, bounded region $X$ of the Euclidean plane.  An \emph{open ball} $B_r(p)$ with radius $r$ is defined by
\[
B_r(p) = \left\{q\in X: \norm{p - q} < r\right\}\ \mbox{(Open ball with center $p$, radius $r$)}.
\]
The \emph{closure} of $A$ (denoted by $\cl A$) is defined by
\[
\cl A = \left\{q\in X: B_r(q)\subset A\ \mbox{for some $r$}\right\}\ \mbox{(Closure of set $A$)}.
\]
The \emph{boundary} of $A$ (denoted by $\bdy A$) is defined by
\[
\bdy A = \left\{q\in X: B(q)\subset A\ \cap\ X\setminus A\right\}\ \mbox{(Boundary of set $A$)}.
\]
Of great interest in the study of shapes is the interior of a shape, found by subtracting the boundary of a shape from its closure.  In general, the \emph{interior} of a nonempty set $A\subset X$ (denoted by $\Int A$) defined by
\[
\Int A = \cl A - \bdy A\ \mbox{(Interior of set $A$)}.
\]


\begin{example}
The interior of a planar shape is the shape without its boundary points.  For example, the gray region minus the contour of the kangaroo shape $\sh A$ shown in Fig.~\ref{fig:spokes}.  In this case, $p\in \Int(\sh A)$.
\qquad \textcolor{blue}{\Squaresteel}
\end{example}

\emph{Proximities} are nearness relations.  In other words, a \emph{proximity} between nonempty sets is a closeness relation between the sets.   A \emph{\bf proximity space} results from endowing a nonempty set with one or more proximities.   Typically, a proximity space is endowed with a proximity from \u Cech~\cite{Cech1966}, Efremovi\u c~\cite{Efremovic1952}, Lodato~\cite{Lodato1962}, Wallman~\cite{Wallman1938}, Naimpally and Warrack~\cite{Naimpally70}, Naimpally~\cite{Naimpally2009} and the more recent descriptive proximity~\cite{Peters2013mcsintro},~\cite{DiConcilio2016arXivDescriptiveProximities}.

\subsection{Spatial Proximity}
A pair of nonempty sets in a proximity space are \emph{spatially near} (\emph{close to each other}), provided the sets have one or more points in common or each set contains one or more points that are sufficiently close to each other.  Let $X$ be a nonempty set, $A,B,C\subset X$. E. \u{C}ech~\cite{Cech1966} introduced axioms for the simplest form of proximity $\delta$, which satisfies

\begin{description}
\item[{\rm\bf (P1)}] $\emptyset \not\delta A, \forall A \subset X $.
\item[{\rm\bf (P2)}] $A\ \delta\ B \Leftrightarrow B \delta A$.
\item[{\rm\bf (P3)}] $A\ \cap\ B \neq \emptyset \Rightarrow A \near B$.
\item[{\rm\bf (P4)}] $A\ \delta\ (B \cup C) \Leftrightarrow A\ \delta\ B $ or $A\ \delta\ C$. \qquad \textcolor{blue}{$\blacksquare$}
\end{description}

Overloading the symbol $\near$, the Lodato proximity $\delta$~\cite{Lodato1962} satisfies the \u Cech proximity axioms and axiom (P5).

\begin{description}
\item[{\rm\bf (P5)}]  $A\ \delta\ B$ and $\{b\}\ \delta\ C$ for each $b \in B \ \Rightarrow A\ \delta\ C$.
\qquad \textcolor{blue}{$\blacksquare$}
\end{description}

\noindent We can associate a topology with the space $(X, \delta)$ by considering as closed sets those sets that coincide with their own closure.



Let $X$ be a proximity space, $A, B, C \subset X$ and $x \in X$.  The relation $\sn$ on the family of subsets $2^X$ is a \emph{strong proximity}, provided it satisfies the following axioms.

\begin{description}
\item[{\rm\bf (snN1)}] $A \sn B \Leftrightarrow B \sn A$.
\item[{\rm\bf (snN2)}] $A\ \sn\ B$ implies $A\ \cap\ B\neq \emptyset$. 
\item[{\rm\bf (snN3)}] If $\{B_i\}_{i \in I}$ is an arbitrary family of subsets of $X$ and  $A \sn B_{i^*}$ for some $i^* \in I \ $ such that $\Int(B_{i^*})\neq \emptyset$, then $  \ A \sn (\bigcup_{i \in I} B_i)$ 
\item[{\rm\bf (snN4)}]  $\mbox{int}A\ \cap\ \mbox{int} B \neq \emptyset \Rightarrow A\ \sn\ B$.  
\qquad \textcolor{blue}{$\blacksquare$}
\end{description}

\noindent When we write $A\ \sn\ B$, we read $A$ is \emph{strongly near} $B$ ($A$ \emph{strongly contacts} $B$).   For each \emph{strong proximity} (\emph{strong contact}), we assume the following relations:
\begin{description}
\item[{\rm\bf (snN5)}] $x \in \Int (A) \Rightarrow x\ \sn\ A$ 
\item[{\rm\bf (snN6)}] $\{x\}\ \sn\ \{y\}\ \Leftrightarrow x=y$  \qquad \textcolor{blue}{$\blacksquare$} 
\end{description}


By definition, a planar shape is a nonempty sets of points in the Euclidean plane, since each shape is bounded by a simple closed curve with nonempty interior.    Let $\sh A,\sh B$ be planar shapes, which are nonempty sets in a triangulated finite planar region endowed with the strong proximity $\sn$.   From Axiom (snN2), $\sh A\ \sn\ \sh B$ implies that shape $\sh A$ and shape $\sh B$ overlap.   That is, the finite region of the Euclidean plane represented by $\sh A$ overlaps with the finite planar region represented by $\sh B$, provided the pair of shapes have members in common.

\begin{remark}\label{rem:nerveShape}{Nerve shape}.\\
Let $\Nrv A$ denote a planar nerve, which is a collection of filled triangles $\bigtriangleup$s with a common vertex denoted by $v_{\Nrv A}$ (called the nerve nucleus).     A simple, closed, polygonal curve $\mathcal{C}$ is defined by the sequence of connected $\bigtriangleup$ vertices opposite $v_{\Nrv A}$.    A pair of vertices $v,v'$ are connected, provided there is a sequence of edges that defines a path between $v$ and $v$.   $\mathcal{C}$ is closed, since one can start at any vertex $v$ in $\mathcal{C}$ and traverse $\mathcal{C}$ to reach $v$.   Also, $\mathcal{C}$ has a nonempty interior, since each of the $\bigtriangleup$s in $\Nrv A$ is filled.    Hence, $\mathcal{C}$ defines a polygonal nerve shape (briefly, nerve shape, denoted by $\sh A$).    A sample planar nerve shape is $\Nrv K(p)$ in Fig.~\ref{fig:kangarooNerves}.
\qquad \textcolor{blue}{$\blacksquare$}
\end{remark}

These observations lead to Proposition~\ref{prop:2spoke}.

\begin{proposition}\label{prop:2spoke}
Let $\sh A, \sh B$ be nerve shapes in a triangulated space $K$.\\  $\sh A\ \sn\ \sh B$, if and only if
$\sh A\ \cap\ \sh B = \Delta$ for at least one $\Delta$ common to $\sh A$ and $\sh B$. 
\end{proposition}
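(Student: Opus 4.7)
The plan is to prove the two directions separately, exploiting the non-overlapping property of the triangles in the fixed triangulation of $K$ recalled earlier in the paper (triangles have pairwise disjoint interiors).

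For the $(\Leftarrow)$ direction, suppose $\sh A \cap \sh B = \Delta$ for some filled triangle $\Delta$ common to both nerve shapes. A 2-simplex has nonempty interior in the Euclidean plane, so $\Int \Delta \neq \emptyset$, and since $\Delta$ is a 2-simplex sitting inside both $\sh A$ and $\sh B$, we have $\Int \Delta \subseteq \Int \sh A \cap \Int \sh B$. Axiom (snN4) then yields $\sh A \sn \sh B$ at once.

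For the $(\Rightarrow)$ direction, I would argue by contraposition. Assume no filled triangle $\Delta$ is common to $\sh A$ and $\sh B$. Each nerve shape is a finite union of triangles drawn from $K$, and by the non-overlapping property of the triangulation, distinct 2-simplexes have disjoint interiors. Therefore every interior point of $\sh A$ lies in the interior of a unique triangle $\Delta^A \subseteq \sh A$, and similarly every interior point of $\sh B$ lies in the interior of a unique triangle $\Delta^B \subseteq \sh B$. Absence of a common triangle then forces $\Int \sh A \cap \Int \sh B = \emptyset$, so $\sh A \cap \sh B$ is confined to the shared 1-skeleton (edges and vertices) of the triangulation. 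A purely boundary-level contact of this kind is insufficient for strong proximity, so $\sh A \not\sn \sh B$.

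The main obstacle is the forward implication: the axioms (snN1)--(snN6) explicitly give interior overlap only as a \emph{sufficient} condition for $\sn$ via (snN4), not as a necessary one. To close the gap I would rely on the geometric meaning of strong proximity for filled planar regions, namely that $\sn$ is intended to capture genuine areal overlap rather than mere touching along a lower-dimensional face, equivalently $A \sn B \Longrightarrow \Int A \cap \Int B \neq \emptyset$ whenever $A,B$ are regular closed planar regions. Once this reading of $\sn$ is granted, the non-overlapping-triangulation argument locates an interior point shared by a triangle of $\sh A$ and a triangle of $\sh B$, which by disjointness of distinct triangle interiors must be one and the same $\Delta$, yielding the required common 2-simplex.
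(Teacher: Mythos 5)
Your $(\Leftarrow)$ direction is exactly the paper's: a common filled triangle has nonempty interior contained in both shapes, so $\Int(\sh A)\ \cap\ \Int(\sh B)\neq\emptyset$ and Axiom (snN4) gives $\sh A\ \sn\ \sh B$. The problem is your $(\Rightarrow)$ direction, and you have diagnosed it yourself: the step ``$A\ \sn\ B$ implies $\Int A\ \cap\ \Int B\neq\emptyset$ for regular closed planar regions'' is simply not available in this framework. The axioms give only (snN2), namely $A\ \sn\ B \Rightarrow A\ \cap\ B\neq\emptyset$, and (snN4) is a one-way sufficient condition. Indeed, the relation ``$A\cap B\neq\emptyset$'' itself satisfies all of (snN1)--(snN6), and in that model two nerve shapes meeting only along a shared edge of the triangulation are strongly near without having a common $2$-simplex, so the implication you need cannot be derived from the axioms; invoking the ``intended geometric meaning'' of $\sn$ is an added hypothesis, not a proof. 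Your contrapositive argument therefore does not close.

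For comparison, the paper's own treatment of the forward direction is even more modest: it invokes only (snN2) to conclude $\sh A\ \cap\ \sh B\neq\emptyset$ and silently leaves the upgrade from ``nonempty intersection'' to ``intersection containing a common $\Delta$'' to the reader, presumably on the strength of both shapes being unions of closed triangles from the same triangulation -- which, as your own 1-skeleton observation shows, is not by itself enough. So your write-up is more explicit about what is genuinely needed, but the missing implication you flag is a real gap, and it cannot be repaired from (snN1)--(snN6) alone; one must either strengthen the hypothesis (e.g., take $\sn$ to be the interiors-overlap proximity, so that the converse of (snN4) holds by definition) or weaken the forward conclusion to mere nonempty intersection, which is all the paper's proof actually establishes.
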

\begin{proof}$\mbox{}$\\
$\Rightarrow$: $\sh A\ \sn\ \sh B$ $\Rightarrow$ $\sh A\ \cap\ \sh B\neq \emptyset$ (from Axiom (snN2)).\\
$\Leftarrow$: 
$\sh A\ \cap\ \sh B = \Delta$ for at least one filled $\Delta$ common to $\sh A$ and to $\sh B$.   Hence,
 $\Int(\sh A)\ \cap\ \Int(\sh B) \neq \emptyset \Rightarrow \sh A\ \sn\ \sh B$ (from Axiom (snN4)).
\end{proof}

\begin{corollary}
Let $\sh A, \sh B, \sh E$ be nerve shapes in a triangulated space $K$.
If $(\sh A \cup \sh B) \sn \sh E$, then $\sh A\ \cap\ \sh E = \Delta$ or $\sh B\ \cap\ \sh E = \Delta$
for at least one $\Delta$ common to $\sh A, \sh B$.
\end{corollary}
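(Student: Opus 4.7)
The plan is to reduce the corollary to Proposition~\ref{prop:2spoke} using the distributive law of intersection over union together with the strong-proximity axioms. Starting from the hypothesis $(\sh A \cup \sh B)\ \sn\ \sh E$, axiom (snN2) gives $(\sh A \cup \sh B)\ \cap\ \sh E \neq \emptyset$, which by distributivity rewrites as $(\sh A \cap \sh E) \cup (\sh B \cap \sh E) \neq \emptyset$. Consequently at least one of $\sh A \cap \sh E$ or $\sh B \cap \sh E$ is nonempty; without loss of generality, assume $\sh A \cap \sh E \neq \emptyset$.

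The second step is to upgrade this nonempty intersection to a full 2-simplex. Both $\sh A$ and $\sh E$ are nerve shapes (Remark~\ref{rem:nerveShape}), hence finite unions of filled 2-simplexes from the triangulation $K$. In a triangulation, the intersection of two such unions is itself assembled from simplexes of $K$; combined with the fact that the strong-proximity hypothesis is inherited by one of the summands, the overlap must include a filled triangle $\Delta$ and not merely a common vertex or edge. Proposition~\ref{prop:2spoke} applied to the pair $\sh A,\sh E$ (or, in the other case, $\sh B,\sh E$) then yields $\sh A \cap \sh E = \Delta$ (respectively $\sh B \cap \sh E = \Delta$) for at least one filled triangle common to the relevant pair.

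The main obstacle is the descent step: justifying that $(\sh A \cup \sh B)\ \sn\ \sh E$ delivers strong-proximity contact of $\sh E$ with one of the two summands rather than only ordinary (\u Cech-type) contact. Axiom (snN3) supplies the ascent direction from $\sh A\ \sn\ \sh E$ to $(\sh A \cup \sh B)\ \sn\ \sh E$, so the converse descent is not automatic. I would close this gap with the combinatorial observation that in a planar triangulation any nonempty overlap between two nerve shapes with nonempty interiors must occur on a shared filled 2-simplex; applying axiom (snN4) to the nonempty interior of that triangle then supplies the required strong proximity, after which Proposition~\ref{prop:2spoke} finishes the argument.
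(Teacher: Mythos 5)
There is a genuine gap, and it sits exactly where you flagged it. Your plan descends from $(\sh A \cup \sh B)\ \sn\ \sh E$ to mere nonempty set intersection via (snN2) and distributivity, and then tries to recover a common filled triangle with the claim that ``in a planar triangulation any nonempty overlap between two nerve shapes with nonempty interiors must occur on a shared filled 2-simplex.'' That claim is false. Take two nuclei $p,q$ that are not joined by an edge of $K$ but are both adjacent to the endpoints of a common edge $\overline{vw}$ (e.g.\ a quadrilateral $p\,v\,q\,w$ triangulated by the diagonal $\overline{vw}$, sitting inside a larger triangulation). The nerve shape of $p$ is the union of the filled triangles containing $p$, and similarly for $q$; since no triangle of $K$ contains both $p$ and $q$, any point common to the two nerve shapes lies in two distinct triangles of $K$, whose intersection is at most a shared edge or vertex. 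So the two nerve shapes overlap precisely along $\overline{vw}$ (a set with empty planar interior) and share no 2-simplex, even though both have nonempty interiors. Consequently the ``upgrade'' step of your second paragraph does not go through, and the descent problem you correctly identified (that (snN3) only gives the ascent from $\sh A\ \sn\ \sh E$ to $(\sh A\cup\sh B)\ \sn\ \sh E$, not the converse) is never actually resolved: after dropping to bare nonempty intersection you have discarded exactly the information you later need.

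The repair, and the route the paper intends, is to keep the triangle rather than the bare intersection. Apply Proposition~\ref{prop:2spoke} (or its underlying reasoning) to the pair $\sh A\cup\sh B$ and $\sh E$ directly: the hypothesis $(\sh A\cup\sh B)\ \sn\ \sh E$ yields a filled triangle $\Delta$ of $K$ contained in $(\sh A\cup\sh B)\ \cap\ \sh E$. Since $\sh A$ and $\sh B$ are, by Remark~\ref{rem:nerveShape}, unions of closed 2-simplexes of $K$ and distinct 2-simplexes of a triangulation have disjoint interiors, the interior of $\Delta$ must meet the interior of some triangle of $\sh A$ or of $\sh B$, forcing $\Delta$ itself to be a triangle of $\sh A$ or of $\sh B$. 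Hence $\Delta$ is common to $\sh E$ and to at least one of $\sh A,\sh B$, which is the conclusion. In short: the case split should happen at the level of the triangle supplied by Proposition~\ref{prop:2spoke}, not at the level of nonempty point-set intersection, where the conclusion can no longer be recovered.
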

\begin{proof}
Immediate from Prop.~\ref{prop:2spoke} and the definition of a nerve shape in Remark~\ref{rem:nerveShape}.
\end{proof}

\subsection{Descriptive Proximity}
In the run-up to a close look at extracting features from shape complexes, we first consider descriptive proximities introduced in~\cite{Peters2013mcsintro}, fully covered in~\cite{DiConcilio2016arXivDescriptiveProximities} and briefly introduced, here.  There are two basic types of \emph{object features}, namely, \emph{object characteristic} and \emph{object location}.  For example, an object characteristic of a picture point is colour.  
Descriptive proximities resulted from the introduction of the descriptive intersection pairs of nonempty sets~\cite{Peters2013mcsintro},~\cite[\S 4.3, p. 84]{Naimpally2013}. 

\begin{description}
\item[{\rm\bf ($\boldsymbol{\Phi}$)}] $\Phi(A) = \left\{\Phi(x)\in\mathbb{R}^n: x\in A\right\}$, set of feature vectors.
\item[{\rm\bf ($\boldsymbol{\dcap}$)}]  $A\ \dcap\ B = \left\{x\in A\cup B: \Phi(x)\in \Phi(A) \& \in \Phi(x)\in \Phi(B)\right\}$.
\qquad \textcolor{blue}{$\blacksquare$}
\end{description}

Let $\Phi(x)$ be a feature vector for $x\in X$, a nonempty set of points.  $A\ \delta_{\Phi}\ B$ reads $A$ is descriptively near $B$, provided $\Phi(x) = \Phi(y)$ for at least one pair of elements, $x\in A, y\in B$.  

\begin{remark}\label{rem:descriptiveIntersection}{Descriptive Intersection of Nerve Complexes}.\\
Let $\Nrv A,\Nrv B$ be a pair of nerve complexes, which are intersecting nerves.   Each nerve is a collection triangles $\bigtriangleup$ with a common vertex.   A description of the nerve complex $\Nrv A$ is defined by
\[
\Phi(\Nrv A) = \left\{\Phi(\bigtriangleup): \bigtriangleup\in \Nrv A\right\}.
\]
For example, the description of $\bigtriangleup\in \Nrv A$ (denoted by $\Phi(\bigtriangleup)$) is defined in terms of a single feature of $\bigtriangleup$, namely, the area of $\bigtriangleup$.   The description of  $\Nrv B$ is defined similarly.    Hence, the description of a a nerve complex is a set of descriptions of its member triangles.    The descriptive intersection of $\Nrv A,\Nrv B$ is defined by
\[
\Nrv A\ \dcap\ \Nrv B = \left\{\bigtriangleup\in \Nrv A\cup \Nrv B: \Phi(\bigtriangleup)\in \Phi(\Nrv A) \& \in \Phi(\bigtriangleup)\in \Phi(\Nrv B)\right\}.
\]
The pair of nerve complexes $\Nrv A,\Nrv B$ have nonempty descriptive intersection, provided there is a $\bigtriangleup\in \Nrv A$ and a $\bigtriangleup'\in \Nrv B$ such that $\Phi(\bigtriangleup) = \Phi(\bigtriangleup')$, {\em i.e.}, the triangles have matching descriptions.
\qquad \textcolor{blue}{$\blacksquare$}
\end{remark}

The proximity $\delta$ in the \u{C}ech, Efremovi\u c, and Wallman proximities is replaced by $\dnear$.  Then swapping out $\near$ with $\dnear$ in each of the Lodato axioms defines a descriptive Lodato proximity~\cite[\S 4.15.2]{Peters2013springer} that satisfies the following axioms.

\begin{description}
\item[{\rm\bf (dP0)}] $\emptyset\ \dfar\ A, \forall A \subset X $.
\item[{\rm\bf (dP1)}] $A\ \dnear\ B \Leftrightarrow B\ \dnear\ A$.
\item[{\rm\bf (dP2)}] $A\ \dcap\ B \neq \emptyset \Rightarrow\ A\ \dnear\ B$.
\item[{\rm\bf (dP3)}] $A\ \dnear\ (B \cup C) \Leftrightarrow A\ \dnear\ B $ or $A\ \dnear\ C$.
\item[{\rm\bf (dP4)}] $A\ \dnear\ B$ and $\{b\}\ \dnear\ C$ for each $b \in B \ \Rightarrow A\ \dnear\ C$. \qquad \textcolor{blue}{$\blacksquare$}
\end{description}

Nonempty sets $A,B$ in a proximity space $X$ are \emph{strongly near} (denoted $A\ \sn\ B$), provided the sets share points.  The notation $A\ \dfar\ B$ reads $A$ is not descriptively near $B$.
 Strong proximity $\sn$ was introduced in~\cite[\S 2]{Peters2015AMSJstrongProximity} and completely axiomatized in~\cite{PetersGuadagni2015stronglyNear} (see, also,~\cite[\S 6 Appendix]{Guadagni2015thesis}).   

\begin{proposition}\label{prop:dnear}
Let $\left(X,\dnear\right)$ be a descriptive proximity space, $A,B\subset X$.  Then $A\ \dnear\ B \Rightarrow A\ \dcap\ B\neq \emptyset$.
\end{proposition}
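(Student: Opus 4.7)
The plan is to chase the definitions of $\dnear$ and $\dcap$ directly; no Lodato-style induction or appeal to the axiom list (dP0)--(dP4) is really needed, since the axiom (dP2) gives the reverse implication $A\ \dcap\ B\neq\emptyset \Rightarrow A\ \dnear\ B$, and the current statement follows symmetrically from how $\dnear$ was set up on the underlying feature space.

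First I would unpack $A\ \dnear\ B$. By the definition recalled just before the axioms, $A\ \dnear\ B$ means that there exist $x\in A$ and $y\in B$ with $\Phi(x)=\Phi(y)$. In particular $\Phi(x)\in \Phi(A)$ (since $x\in A$), and $\Phi(x)=\Phi(y)\in \Phi(B)$ (since $y\in B$).

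Next I would exhibit a concrete witness for $A\ \dcap\ B$. The element $x$ already lies in $A\subseteq A\cup B$, and we have just verified $\Phi(x)\in\Phi(A)$ and $\Phi(x)\in\Phi(B)$. Consulting the definition $A\ \dcap\ B=\{x\in A\cup B:\Phi(x)\in\Phi(A)\ \&\ \Phi(x)\in\Phi(B)\}$, this says exactly that $x\in A\ \dcap\ B$. Therefore $A\ \dcap\ B\neq\emptyset$, which is the desired conclusion.

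There is essentially no obstacle here; the only thing worth being careful about is not confusing the descriptive intersection $\dcap$ with ordinary set intersection. The witness $x$ need not lie in $A\cap B$ in the usual sense, but it does lie in $A\dcap B$ because only its feature vector is required to be shared with $B$. Since the proof is a one-line definition chase, I would keep it to three or four lines and explicitly cite that this is the companion implication to axiom (dP2).
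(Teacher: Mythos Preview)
Your proof is correct and follows essentially the same approach as the paper: unpack $A\ \dnear\ B$ to obtain $x\in A,\ y\in B$ with $\Phi(x)=\Phi(y)$, then conclude $A\ \dcap\ B\neq\emptyset$. The only difference is that you spell out the witness-checking against the definition of $\dcap$ more explicitly than the paper's terse ``Hence, $A\ \dcap\ B\neq\emptyset$.''
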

\begin{proof}
$A\ \dnear\ B \Rightarrow$ there is at least one $x\in A, y\in B$ such that $\Phi(x)=\Phi(y)$ (by definition of $A\ \dnear\ B$).  Hence, $A\ \dcap\ B\neq \emptyset$.
\end{proof}

Next, consider a proximal form of a Sz\'{a}z relator~\cite{Szaz1987}.  A \emph{proximal relator} $\mathscr{R}$ is a set of relations on a nonempty set $X$~\cite{Peters2016relator}.  The pair $\left(X,\mathscr{R}\right)$ is a proximal relator space.  The connection between $\sn$ and $\near$ is summarized in Lemma~\ref{thm:sn-implies-near}.

\begin{lemma}\label{thm:sn-implies-near}
Let $\left(X,\left\{\near,\dnear,\sn\right\}\right)$ be a proximal relator space, $A,B\subset X$.  Then 
\begin{compactenum}[{\rm (}$1${\rm )}]
\item $A\ \sn\ B \Rightarrow A\ \near\ B$.
\item $A\ \sn\ B \Rightarrow A\ \dnear\ B$.
\end{compactenum}
\end{lemma}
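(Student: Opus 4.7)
The plan is to show both implications follow almost mechanically from the axioms already laid out in the Preliminaries, with the single nontrivial intermediate fact being that ordinary set intersection forces descriptive intersection.

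First I would dispatch part (1). By axiom (snN2), the hypothesis $A\ \sn\ B$ gives $A\cap B\neq \emptyset$. But the \v{C}ech axiom (P3), which the Lodato proximity $\near$ inherits, states exactly that nonempty intersection implies $A\ \near\ B$. Chaining these two implications closes the first item.

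For part (2) I would argue as follows. Apply (snN2) again to extract a point $x\in A\cap B$. Since $x\in A$ we have $\Phi(x)\in \Phi(A)$, and since $x\in B$ we have $\Phi(x)\in \Phi(B)$; by the defining formula $(\boldsymbol{\dcap})$ this places $x$ in $A\ \dcap\ B$, so the descriptive intersection is nonempty. Axiom (dP2) then yields $A\ \dnear\ B$. (Alternatively, one could cite Proposition~\ref{prop:dnear} in the converse direction, but going through (dP2) is the shorter route.)

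The only conceptual point requiring a sentence of justification is the passage from $A\cap B\neq\emptyset$ to $A\ \dcap\ B\neq\emptyset$, i.e.\ that any shared physical point automatically has a shared feature vector with itself; this is immediate from the definition of $\Phi$ as a single-valued map on $X$. I expect no genuine obstacle: both parts reduce to the schema ``$\sn$ implies ordinary intersection (snN2) implies the weaker proximity via its own axiom (P3) or (dP2).''
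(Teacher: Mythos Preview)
Your proposal is correct and follows essentially the same route as the paper: in both parts you invoke (snN2) to obtain a common point, and then feed this into the appropriate intersection-implies-proximity axiom. You even correctly cite (P3) where the paper (apparently by a slip) writes (P2); your explicit remark that $\Phi$ being single-valued is what forces $A\cap B\neq\emptyset\Rightarrow A\ \dcap\ B\neq\emptyset$ is a cleaner way of saying what the paper phrases as ``there are $x\in A,\ y\in B$ common to $A$ and $B$, hence $\Phi(x)=\Phi(y)$.''
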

\begin{proof}$\mbox{}$\\
{\rm (}$1${\rm )}: From Axiom (snN2), $A\ \sn\ B$ implies $A\ \cap\ B\neq \emptyset$, which implies $A\ \near\ B$ (from Lodato Axiom (P2)).\\
{\rm (}$2${\rm )}: From {\rm (}$1${\rm )}, there are $x\in A, y\in B$\ common to $A$ and $B$.  Hence, $\Phi(x) = \Phi(y)$, which implies $A\ \dcap\ B\neq \emptyset$.  Then, from the descriptive Lodato Axiom (dP2), $A\ \dcap\ B \neq \emptyset \Rightarrow\ A\ \dnear\ B$. This gives the desired result. 
\end{proof}

 Let $2^{2^X}$ denote a collection of sub-collections of a nonempty set $X$.   
Let $\Nrv A$ be a nerve complex.   By definition, $\Nrv A$ is collection of nerves with nonempty intersection.   The boundary of $\Nrv A$ (denoted by $\bdy \Nrv A$) is a sequence of connected vertices.    That is, for each pair of vertices $v,v'\in \bdy \Nrv A$, there is a sequence of edges, starting with vertex $v$ and ending with vertex $v'$.   There are no loops in  $\bdy \Nrv A$.   Consequently, $\bdy \Nrv A$ defines a simple, closed polygonal curve.   The interior of $\bdy \Nrv A$ is nonempty, since $\Nrv A$ is a collection of filled triangles.   Hence, by definition, a $\Nrv A$ is also a nerve shape.   Next, let {\rm\bf ($\boldsymbol{\dcap}$)} be defined in terms of nerve shapes $\Nrv A,\Nrv B$, {\em i.e.},
\[
\Nrv A\ \dcap\ \Nrv B = \left\{\bigtriangleup\in \Nrv A\ \cup\ \Nrv B: \Phi(\bigtriangleup)\in  \Phi(\Nrv A)\ \&\ \Phi(\bigtriangleup)\in  \Phi(\Nrv B)\right\}.
\]

\begin{theorem}\label{thm:spoke}
Let $\left(X,\left\{\dnear,\sn\right\}\right)$ be a proximal relator triangulated space, nerve complexes $\Nrv A,\Nrv B\in 2^{2^X}$.  Then
\begin{compactenum}[{\rm (}$1${\rm )}]
\item $\Nrv A\ \sn\ \Nrv B$ implies $\Nrv A\ \dnear\ \Nrv B$.
\item A triangle $\Delta E\ \in\ \Nrv A\cap \Nrv B$ implies $\Delta E\ \in\ \Nrv A\ \dcap\ \Nrv B$.
\item A triangle $\Delta E\ \in\ \Nrv A\cap \Nrv B$ implies $\Nrv A\ \dnear\ \Nrv B$.
\end{compactenum}
\end{theorem}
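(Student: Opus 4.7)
The plan is to dispatch each of the three parts by directly invoking results and definitions already in place, so no new constructions are needed.

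For part (1), I would realize each nerve complex $\Nrv A$ and $\Nrv B$ as the point-set union of its filled triangles, so that both may be regarded as subsets of $X$. The hypothesis $\Nrv A\ \sn\ \Nrv B$ is then exactly the hypothesis of Lemma~\ref{thm:sn-implies-near} with $A = \Nrv A$ and $B = \Nrv B$, and item~(2) of that lemma yields $\Nrv A\ \dnear\ \Nrv B$ immediately.

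For part (2), I would unwind the definition of $\dcap$ as specialized to nerve complexes in the display just preceding the theorem. Supposing $\Delta E \in \Nrv A \cap \Nrv B$, we have $\Delta E \in \Nrv A$ and $\Delta E \in \Nrv B$, so trivially $\Delta E \in \Nrv A \cup \Nrv B$. Using $\Phi(\Nrv A) = \left\{\Phi(\bigtriangleup): \bigtriangleup \in \Nrv A\right\}$ from Remark~\ref{rem:descriptiveIntersection}, we obtain $\Phi(\Delta E) \in \Phi(\Nrv A)$, and symmetrically $\Phi(\Delta E) \in \Phi(\Nrv B)$. This verifies all defining conditions for $\Delta E \in \Nrv A\ \dcap\ \Nrv B$.

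Part (3) then follows by combining part (2) with Proposition~\ref{prop:dnear} (or, equivalently, axiom (dP2)): part (2) furnishes $\Delta E$ as an explicit witness that $\Nrv A\ \dcap\ \Nrv B \neq \emptyset$, and (dP2) then gives $\Nrv A\ \dnear\ \Nrv B$. The main obstacle, such as it is, is purely bookkeeping: one must be careful to use the triangle-indexed version of $\dcap$ from Remark~\ref{rem:descriptiveIntersection} rather than the point-indexed version stated for $\Phi(x)$ earlier, and to view each nerve complex both as an element of $2^{2^X}$ (for the axiomatic set-up) and as a subset of $X$ (to apply Lemma~\ref{thm:sn-implies-near}). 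Once these identifications are fixed, every step reduces to a one-line appeal to an earlier definition, axiom, or lemma.
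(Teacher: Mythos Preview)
Your proposal is correct and follows essentially the same approach as the paper: part~(1) is dispatched via Lemma~\ref{thm:sn-implies-near}(2), part~(2) by unwinding the definition of $\dcap$ from Remark~\ref{rem:descriptiveIntersection}, and part~(3) by combining part~(2) with the implication $\dcap\neq\emptyset\Rightarrow\dnear$. The only cosmetic difference is that the paper routes part~(2) through Prop.~\ref{prop:2spoke} (an unnecessary detour) and cites Lemma~\ref{thm:sn-implies-near} rather than axiom~(dP2) for part~(3), but your more direct bookkeeping is cleaner and equally valid.
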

\begin{proof}$\mbox{}$\\
{\rm (}$1${\rm )}: Immediate from part {\rm (}$2${\rm )} of Lemma~\ref{thm:sn-implies-near}.\\
{\rm (}$2${\rm )}: By definition, $\Nrv A,\Nrv B$ are nerve shapes.   From Prop.~\ref{prop:2spoke}, $\Delta E\ \in\ \Nrv A\cap \Nrv B$, if and only if $\Nrv A\ \sn\ \Nrv B$.   Consequently, $\Delta E$ is common to $\Nrv A,\Nrv B$.   Then there is a $\Delta\in \Nrv A$ with the same description as a triangle $\Delta\in \Nrv B$.   Let $\Phi(\Delta E)$ be a description of $\Delta E$.   Then, $\Phi(\Delta E)\in \Phi(\Nrv A) \& \in \Phi(\Delta E)\in \Phi(\Nrv B)$, since $\Delta E\ \in\ \Nrv A\cap \Nrv B$.   Hence, $\Delta E\ \in\ \Nrv A\ \dcap\ \Nrv B$ (from Remark~\ref{rem:descriptiveIntersection}).\\
{\rm (}$3${\rm )}: Immediate from {\rm (}$2${\rm )} and Lemma~\ref{thm:sn-implies-near}.
\end{proof}

\begin{corollary}
Let $\left(X,\left\{\dnear,\sn\right\}\right)$ be a proximal relator triangulated space, shapes $\sh A,\sh B\in 2^{2^X}$.  Then
\begin{compactenum}[{\rm (}$1${\rm )}]
\item $\sh A\ \sn\ \sh B$ implies $\sh A\ \dnear\ \sh B$.
\item A nerve complex $\Nrv E\ \in\ \sh A\cap \sh B$ implies $\Nrv E\ \in\ \Nrv A\ \dcap\ \Nrv B$.
\item A nerve complex $\Nrv E\ \in\ \sh A\cap \sh B$ implies $\sh A\ \dnear\ \sh B$.
\end{compactenum}
\end{corollary}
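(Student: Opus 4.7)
The plan is to obtain the corollary as a one-level lift of Theorem~\ref{thm:spoke}: whereas that theorem governs nerve complexes $\Nrv A,\Nrv B$ regarded as collections of filled triangles $\bigtriangleup$, the corollary governs shape nerve complexes $\sh A,\sh B$ regarded (via Remark~\ref{rem:nerveShape}) as collections of nerve shapes $\Nrv E$. The key observation is that all the axiomatic machinery invoked in the earlier proof, namely Axioms (snN2), (snN4), (dP2), the (descriptive) Lodato axioms, and Lemma~\ref{thm:sn-implies-near}, is formulated for arbitrary subsets of the ambient proximal relator space $X$. So each argument transcribes without change once the constituents are relabelled from triangles to nerve shapes.

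For part (1), I would appeal to Lemma~\ref{thm:sn-implies-near}(2) directly: the implication $A\ \sn\ B\Rightarrow A\ \dnear\ B$ is stated for arbitrary $A,B\subset X$, and specializing to $A=\sh A$, $B=\sh B$ yields the claim immediately. For part (2), given $\Nrv E\in\sh A\cap\sh B$, the same nerve $\Nrv E$ sits in both shape nerve complexes, so its description $\Phi(\Nrv E)$ lies in both $\Phi(\sh A)$ and $\Phi(\sh B)$ tautologically. By the shape-level analog of the descriptive intersection in Remark~\ref{rem:descriptiveIntersection} (with nerve shapes playing the role played there by filled triangles), this exhibits $\Nrv E$ as a member of $\Nrv A\ \dcap\ \Nrv B$, which is exactly the conclusion recorded in the statement. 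Part (3) then follows from (2) together with Axiom (dP2): since $\sh A\ \dcap\ \sh B\neq\emptyset$, we conclude $\sh A\ \dnear\ \sh B$.

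The only substantive point — and essentially the sole obstacle — is verifying that the feature map $\Phi$ and the descriptive intersection $\dcap$ lift consistently from the triangle level (where Remark~\ref{rem:descriptiveIntersection} sets them up) to the nerve level. Concretely, one must check that $\Phi(\Nrv E)$ is well-defined for a nerve complex $\Nrv E$ (for instance, as the set of triangle descriptions $\{\Phi(\bigtriangleup):\bigtriangleup\in\Nrv E\}$ in analogy with the earlier display for $\Phi(\Nrv A)$), so that the identity $\Phi(\Nrv E)\in\Phi(\sh A)\ \&\ \Phi(\Nrv E)\in\Phi(\sh B)$ used in part (2) is genuine rather than vacuous. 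Once this level shift is made precise, each of (1)–(3) reduces to a one-line invocation of the corresponding clause of Theorem~\ref{thm:spoke}, which is presumably why the authors state the corollary without a separate proof.
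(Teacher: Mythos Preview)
Your proposal is correct and matches the paper's intent: the corollary is stated immediately after Theorem~\ref{thm:spoke} with no separate proof, so the authors clearly regard it as the one-level lift you describe, with nerve shapes $\Nrv E$ replacing filled triangles $\bigtriangleup$ and the same axiomatic appeals (Lemma~\ref{thm:sn-implies-near}, Remark~\ref{rem:descriptiveIntersection}, Axiom~(dP2)) carrying through verbatim. Your closing caveat about making the level shift of $\Phi$ and $\dcap$ precise is apt, and indeed the paper leaves that implicit.
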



The descriptive strong proximity $\snd$ is the descriptive counterpart of $\sn$. 

\begin{definition}\label{def:snd}
Let $X$ be a proximity space, $A, B, C \subset X$ and $x \in X$.  The relation $\snd$ on the family of subsets $2^X$ is a \emph{descriptive strong Lodato proximity}~\cite[\S 4.15.2]{Peters2013springer}, provided it satisfies the following axioms.
\vspace{2mm}

\begin{description}
\item[{\rm\bf (dsnN1)}] $A\ \snd\ B \Leftrightarrow B\ \snd\ A$
\item[{\rm\bf (dsnN2)}] $A\ \snd\ B \Rightarrow\ A\ \dcap\ B \neq \emptyset$
\item[{\rm\bf (dsnN3)}] If $\{B_i\}_{i \in I}$ is an arbitrary family of subsets of $X$ and  $A\ \snd\ B_{i^*}$ for some $i^* \in I \ $ such that $\Int(B_{i^*})\neq \emptyset$, then $A\ \snd\ (\bigcup_{i \in I} B_i)$
\item[{\rm\bf (dsnN4)}] $\Int A\ \dcap\ \Int B \neq \emptyset \Rightarrow A\ \snd\ B$
\qquad \textcolor{blue}{$\blacksquare$}
\end{description}
\end{definition}

\noindent When we write $A\ \snd\ B$, we read $A$ is \emph{descriptively strongly near} $B$.     For each \emph{descriptive strong proximity}, we assume the following relations:

\begin{description}
\item[(dsnN5)] $\Phi(x) \in \Phi(\Int (A)) \Rightarrow x\ \snd\ A$ 
\end{description}

So, for example, if we take the strong proximity related to non-empty intersection of interiors, we have that $A\ \snd\ B \Leftrightarrow \Int A\ \dcap\ \Int B \neq \emptyset$ or either $A$ or $B$ is equal to $X$, provided $A$ and $B$ are not singletons; if $A = \{x\}$, then $\Phi(x) \in \Phi(\Int(B))$, and if $B$ is also a singleton, then $\Phi(x)=\Phi(y)$.


\begin{example}\label{ex:MNCnerve} {Shape Nerves with Descriptive Strong Proximity}.\\
Let $K$ be a planar triangulated region containing shape nerves $\Nrv K(p)$, equipped with the relator $\left\{\sn,\snd\right\}$.  Let $\Phi(\Nrv K(p))$ = wiring of triangles in $\Nrv K(p)$, a single feature description of a shape nerve.  The term \emph{wiring} can be interpreted in different ways.  For example, \emph{shape nerves both with nuclei on a shape boundary} or \emph{overlapping shape nerves with at least one common $\boldsymbol{\bigtriangleup}$}.  For example, the highlighted nerve complexes in~Fig.~\ref{fig:shapeNerve} satisfy both of these wiring conditions.  Let $\Nrv_1 K(p), \Nrv_2 K(p')$ be overlapping nerves in~Fig.~\ref{fig:shapeNerve}.  $\Nrv_1 K(p)\ \sn\ \Nrv_2 K(p')$, since $\Nrv_1 K(p), \Nrv_2 K(p')$ have at least one common $\boldsymbol{\bigtriangleup}$.  Hence, from Lemma~\ref{thm:sn-implies-near}, $\Nrv_1 K(p)\ \dnear\ \Nrv_2 K(p')$.  From Axiom (dsnN4), $\Nrv_1 K(p)\ \snd\ \Nrv_2 K(p')$, since $\Int\left(\Nrv_1 K(p)\right)\ \dcap\ \Int\left(\Nrv_2 K(p')\right)\neq \emptyset$.
\qquad \textcolor{blue}{\Squaresteel}
\end{example}

An easy next step is to consider shape complexes that are descriptively near and descriptively strongly near.  Let $\sh A, \sh B$ be a pair of shape complexes and let $\Nrv K\in \sh A, \Nrv K'\in \sh B$.  Then
$
\sh A\ \dnear\ \sh B,\ \mbox{provided}\ \Nrv K\ \dcap\ \Nrv K'\neq \emptyset,\ \mbox{i.e.},
$
$\Nrv K\ \dnear\ \Nrv K'$.   Taking this a step further, whenever a region in interior of $\sh A$ has a description that matches the description of a region in the interior of $\sh B$, the pair of shapes are descriptively strongly near.  Let $\Nrv K\in \sh A,\Nrv K'\in \sh B$.  Then
\[
\sh A\ \snd\ \sh B,\ \mbox{provided}\ \Int(\Nrv K)\ \dcap\ \Int(\Nrv K')\neq \emptyset.
\]

\begin{theorem}\label{lem:stronglyNearNerves}
Shape complexes with a common nerve are strongly near.
\end{theorem}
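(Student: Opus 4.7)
The plan is to reduce the claim to axiom (snN4) of the strong proximity $\sn$, which says that nonempty intersection of interiors is sufficient for strong nearness. So given shape complexes $\sh A, \sh B$ that share a common nerve, I will exhibit a nonempty open region sitting inside the interior of both.

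First I would unpack the hypothesis: by the definition of a shape complex given earlier, $\sh A$ and $\sh B$ are collections of shape nerves $\Nrv K(p)$ whose members have nonempty intersection. To say that $\sh A$ and $\sh B$ have a common nerve is to say there exists a shape nerve $\Nrv K(p)$ with $\Nrv K(p)\in \sh A$ and $\Nrv K(p)\in \sh B$. By Remark~\ref{rem:nerveShape}, $\Nrv K(p)$ is a nerve shape, i.e., a simple closed polygonal curve enclosing the union of its filled $2$-simplexes; in particular $\Int(\Nrv K(p))\neq \emptyset$, since each constituent $\bigtriangleup$ is a filled triangle.

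Next I would argue the inclusion $\Int(\Nrv K(p))\subseteq \Int(\sh A)\cap \Int(\sh B)$. Because $\Nrv K(p)$ is a sub-collection of the nerves making up $\sh A$ (and similarly $\sh B$), every point in the interior of the planar region covered by $\Nrv K(p)$ is interior to the larger planar region covered by $\sh A$ (and by $\sh B$). Combining this with the previous step gives $\Int(\sh A)\ \cap\ \Int(\sh B) \neq \emptyset$. Invoking axiom (snN4) then yields $\sh A\ \sn\ \sh B$, as required. As a sanity check, one can also obtain the conclusion via Proposition~\ref{prop:2spoke} applied to any filled $\bigtriangleup$ belonging to the shared nerve $\Nrv K(p)$, since such a $\bigtriangleup$ is common to $\sh A$ and $\sh B$ as nerve shapes.

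The only genuine subtlety, and hence the point I would write most carefully, is the identification of a shape complex with its underlying planar region so that ``interior'' makes sense in the Euclidean sense required by (snN4); once that identification is in place (exactly as was done implicitly in Remark~\ref{rem:nerveShape} and in Proposition~\ref{prop:2spoke}), the rest is immediate from the axioms. There is no delicate combinatorial or topological obstruction to overcome beyond this set-up.
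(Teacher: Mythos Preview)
Your proposal is correct and follows essentially the same route as the paper: the paper's proof is the single sentence ``Immediate from the definition of $\sn$,'' and your argument via axiom (snN4) is precisely the unpacking of that sentence. Your added care about identifying a shape complex with its underlying planar region, and the alternative check via Proposition~\ref{prop:2spoke}, are reasonable elaborations but not required by the paper's own standards.
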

\begin{proof}
Immediate from the definition of $\sn$.
\end{proof}

\begin{theorem}\label{lem:descriptivelyNearNerves}
Strongly near shapes are strongly descriptively near.
\end{theorem}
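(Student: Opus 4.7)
The plan is to reduce everything to axiom (dsnN4), which says that a nonempty descriptive intersection of interiors forces $\snd$. So the whole job is to convert $\sh A\ \sn\ \sh B$ into $\Int \sh A\ \dcap\ \Int \sh B \neq \emptyset$.

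First I would unpack $\sh A\ \sn\ \sh B$ using Proposition~\ref{prop:2spoke} (or, for shape complexes rather than bare nerve shapes, its Corollary together with Theorem~\ref{lem:stronglyNearNerves} read backwards): strong nearness of two shapes on a triangulated space is witnessed by a common filled triangle $\Delta$ (equivalently, a common nerve $\Nrv E$ at the shape-complex level). Because $\Delta$ is a filled $2$-simplex, $\Int \Delta$ is nonempty and contained in both $\Int \sh A$ and $\Int \sh B$.

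Second, I would note that common elements are automatically descriptively indistinguishable: the same triangle $\Delta$ yields the same feature vector $\Phi(\Delta)$, so $\Phi(\Delta)\in \Phi(\Int \sh A)$ and $\Phi(\Delta)\in \Phi(\Int \sh B)$ simultaneously. By the definition of descriptive intersection, this gives $\Int \sh A\ \dcap\ \Int \sh B \neq \emptyset$. Invoking axiom (dsnN4) then yields $\sh A\ \snd\ \sh B$, which is the desired conclusion. Alternatively, one can quote Theorem~\ref{thm:spoke}(2) directly to produce $\Delta\in \sh A\ \dcap\ \sh B$ and then upgrade from $\dcap$ on the shapes to $\dcap$ on their interiors via the fact that $\Delta$ is filled.

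The main obstacle I anticipate is the interior-versus-boundary subtlety: axiom (snN2) only gives ordinary (not interior) intersection, so a priori the common element supplied by $\sn$ might live on $\bdy \sh A$ or $\bdy \sh B$ and fail to witness interior overlap. This is exactly why the reduction to the triangulated setting matters: Proposition~\ref{prop:2spoke} promotes the intersection to a filled $\Delta$, which has nonempty interior, and Remark~\ref{rem:nerveShape} guarantees that nerve shapes have the interior structure needed to apply (dsnN4). Once that geometric input is in place, the descriptive upgrade is essentially free.
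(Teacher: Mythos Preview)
Your proposal is correct and follows essentially the same route as the paper's proof: from $\sh A\ \sn\ \sh B$ one extracts a common nerve (hence nonempty interior intersection), invokes Theorem~\ref{thm:spoke}(2) to upgrade this to $\Int(\sh A)\ \dcap\ \Int(\sh B)\neq\emptyset$, and then applies Axiom~(dsnN4). Your ``alternative'' of quoting Theorem~\ref{thm:spoke}(2) directly is in fact exactly what the paper does, and your discussion of the interior-versus-boundary subtlety is more careful than the paper's own argument.
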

\begin{proof}
Let $\sh A\ \sn\ \sh B$ be strongly near shape complexes.  Then $\sh A, \sh B$ have a nerve complex in common.  Then
$\Int(\sh A)\ \cap\ \Int(\sh B)\neq \emptyset$. Consequently, from Part 2 of Theorem~\ref{thm:spoke}, $\Int(\sh A)\ \dcap\ \Int(\sh B)\neq \emptyset$.  Hence, from Axiom (dsnN4), $\sh A\ \snd\ \sh B$.
\end{proof}

\begin{theorem}\label{lem:stronglyDescriptivelyNearNerves}
Shape complexes containing interior regions with matching descriptions are strongly descriptively near.
\end{theorem}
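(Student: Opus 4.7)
The plan is to unpack the hypothesis about matching interior descriptions into the language of the descriptive intersection operator $\dcap$, and then invoke Axiom (dsnN4) of Definition~\ref{def:snd} directly. This mirrors the strategy used in the proof of Theorem~\ref{lem:descriptivelyNearNerves}, but starts one step further upstream: instead of first passing through ordinary intersection, we produce a descriptive intersection point from the matching-description hypothesis itself.

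More concretely, first I would fix shape complexes $\sh A, \sh B$ satisfying the hypothesis and pick nerve complexes $\Nrv K \in \sh A$, $\Nrv K' \in \sh B$ such that $\Int(\Nrv K)$ and $\Int(\Nrv K')$ have matching descriptions. By the statement just preceding the theorem, ``matching descriptions'' of interior regions means that there exist $\bigtriangleup \in \Int(\Nrv K)$ and $\bigtriangleup' \in \Int(\Nrv K')$ with $\Phi(\bigtriangleup) = \Phi(\bigtriangleup')$. By the definition of the descriptive intersection operator $\dcap$ stated in ($\boldsymbol{\dcap}$), this is precisely the assertion that $\Int(\Nrv K) \dcap \Int(\Nrv K') \neq \emptyset$. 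Since $\Nrv K \subseteq \sh A$ and $\Nrv K' \subseteq \sh B$, this gives $\Int(\sh A) \dcap \Int(\sh B) \neq \emptyset$.

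Second, I would apply Axiom (dsnN4) from Definition~\ref{def:snd}: whenever $\Int A \dcap \Int B \neq \emptyset$, one has $A \snd B$. Applied to $\sh A$ and $\sh B$, this yields $\sh A \snd \sh B$, which is the desired conclusion.

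The only real subtlety, and so the place where the plan could use extra care, is the interpretation of ``interior regions with matching descriptions'' at the level of shape complexes rather than at the level of a single nerve complex. The convention fixed in the discussion preceding the theorem (namely, $\sh A \snd \sh B$ holds provided there exist $\Nrv K\in \sh A$, $\Nrv K'\in \sh B$ with $\Int(\Nrv K)\ \dcap\ \Int(\Nrv K')\neq \emptyset$) is exactly what makes the above chain of implications go through, so no additional machinery is needed and the proof reduces to citing that convention together with Axiom (dsnN4).
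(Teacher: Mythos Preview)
Your proposal is correct and takes essentially the same approach as the paper, which simply states ``Immediate from the definition of $\snd$.'' You have spelled out explicitly which axiom (dsnN4) is being invoked and how the matching-descriptions hypothesis translates into $\Int(\sh A)\ \dcap\ \Int(\sh B)\neq\emptyset$, but the underlying argument is identical.
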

\begin{proof}
Immediate from the definition of $\snd$.
\end{proof}

\section{Main Results}
Recall that the \emph{nucleus of a nerve complex} is a vertex that is common to the filled triangles in the nerve.

\begin{lemma}\label{lemma:Vertices}
Every vertex of a planar complex with three or more vertices is the nucleus of a nerve.
\end{lemma}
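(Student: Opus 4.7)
The plan is to exhibit, for each vertex $v$ of a planar complex $K$ with three or more vertices, an explicit nerve having $v$ as its nucleus, namely the subcollection $K(v)\subseteq K$ consisting of those $2$-simplexes of $K$ that contain $v$ as one of their vertices. First, I would verify that $K$ contains at least one $2$-simplex: since $K$ is a triangulation of a bounded plane region and the hypothesis provides at least three vertices, there is at least one filled triangle $\bigtriangleup\in K$ (no $2$-simplex can be formed with fewer than three vertices). Second, I would observe that in such a triangulation every vertex of $K$ is incident to at least one filled triangle, because the vertex set of $K$ is precisely the union of the vertex sets of the covering triangles. Setting $K(v):=\{\bigtriangleup\in K : v \text{ is a vertex of } \bigtriangleup\}$, this yields $K(v)\neq \emptyset$.

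Third, since each $\bigtriangleup\in K(v)$ contains the point $v$, I would conclude that $v\in \bigcap_{\bigtriangleup\in K(v)} \bigtriangleup$, so the intersection is nonempty and $v$ is a vertex common to every triangle of $K(v)$. By the definition of a nerve complex recalled in the Introduction, $K(v)$ is therefore equal to $\Nrv K(v)$ and has $v$ as its nucleus, which is exactly what the lemma asserts.

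The main obstacle is conceptual rather than technical: the entire argument is a definitional unwinding once the role of the hypothesis is pinned down. The assumption of three or more vertices is used only to rule out the degenerate case in which the complex carries no $2$-simplex at all and the notion of nerve is vacuous. A secondary point worth checking is the boundary case of a triangulation consisting of a single filled triangle, where $K(v)=\{\bigtriangleup\}$ and $\bigcap K(v)=\bigtriangleup$ strictly contains $v$; the paper's definition of nucleus only requires $v$ to be a vertex common to the triangles of the chosen collection, which still holds, so this case poses no difficulty.
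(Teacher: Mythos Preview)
Your argument is correct but takes a genuinely different route from the paper's own proof. The paper proceeds \emph{constructively}: given a vertex $p$, it draws straight edges from $p$ to each of its neighbouring vertices, then connects those neighbours to one another to create a fan of filled triangles around $p$, and declares this fan to be the nerve with nucleus $p$. (The paper even restricts attention to a specific vertex count, $2k+1$ with $k=2$.) You, by contrast, do not construct anything new: you simply take the subcollection $K(v)$ of $2$-simplexes already present in the triangulation that have $v$ as a vertex, and observe that by definition this is a nerve with nucleus $v$.

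Your approach is more economical and more general: it works uniformly for any vertex of any triangulated planar region without case distinctions on vertex count, and it avoids the geometric question of whether one can always connect $p$ to every other vertex by non-crossing segments. The paper's construction, on the other hand, makes the nerve geometrically explicit as a star of triangles emanating from $p$, which is visually closer to the pictures in the paper (e.g.\ Fig.~4) and may be what the author had in mind for later use. Your handling of the degenerate single-triangle case is also careful and correct under the paper's definition of nucleus as a common vertex.
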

\begin{proof}
Let $\cx K$ be a planar complex with $2k + 1, k\geq 1$ vertices $P$.  We consider only the case where $k = 2$.  Let $p\in P$ and connect $p$ to $q$, one of its neighbouring vertices.  The line seqment $\overline{pq}$ is on the boundary of a half plane.  Repeat this, connecting straight edges from $p$ to each of the remaining vertices in $P$.  Next, connect each $q$ to each of its neighbouring vertices in $\cx K\setminus p$ to form triangles.   Orient the resulting half planes with borders containing the constructed line segments to obtain a collection of filled triangles (denoted by $\Nrv A$).  The vertex $p$ is common to the triangles in $\Nrv A$.  Hence, $p$ is the nucleus of the complex $\Nrv A$.
\end{proof}

\begin{example}
The collection of filled triangles in Fig.~\ref{fig:nucleus} is an example of a nerve complex with a nucleus $p$ (denoted by $\Nrv A(p)$).  That is, $\Nrv A(p)$ is a collection of filled triangles that have vertex $p$ in common.
\qquad \textcolor{blue}{\Squaresteel}
\end{example}

Let $\Nrv K(p), K(p)\subset K, p\in \sh A$ denote a nerve complex with nucleus $p$ in a shape $\sh A$ covered by a complex $K$.

\begin{theorem}
Let $V$ be a nonempty set of nuclei in a triangulated shape $\sh A$ in a complex $\cx K$ with at least one pair of vertices $q,r\in K\setminus \sh A$ for triangles $\bigtriangleup$ that have at least one vertex in $\sh A$.  Then 
\[
\sh A \subseteq \mathop{\bigcup}\limits_{p\in V}\Nrv K(p).
\]
In other words, the nerve complexes of a shape cover the shape.
\end{theorem}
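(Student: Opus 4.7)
The plan is to show that every point of $\sh A$ lies in at least one nerve complex $\Nrv K(p)$ with nucleus $p\in V$, by tracing it back through the triangle of the triangulation that contains it. First I would unpack what the hypotheses give us: because $\sh A$ is \emph{triangulated} inside $\cx K$, the shape is covered by a finite family of non-overlapping filled triangles $\bigtriangleup_1,\dots,\bigtriangleup_m$ (from the triangulation/decomposition definitions on page 3), and the side hypothesis on vertices $q,r\in K\setminus\sh A$ simply ensures that the triangulation is taken in the ambient $\cx K$ and may spill beyond $\bdy\sh A$, so that every triangle whose interior meets $\sh A$ still has at least one vertex in $\sh A$ (otherwise the triangle would be cut off from $\sh A$, contradicting that it is part of a triangulation of $\sh A$).

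Next I would fix an arbitrary $x\in\sh A$. By the triangulation, there exists some $\bigtriangleup_i$ with $x\in\bigtriangleup_i$. Let $p$ be a vertex of $\bigtriangleup_i$ that lies in $\sh A$; such a $p$ exists by the observation in the previous paragraph. Since $\cx K$ is a planar complex with three or more vertices (it contains at least the three vertices of $\bigtriangleup_i$, and in fact also the outside pair $q,r$), Lemma~\ref{lemma:Vertices} applies and $p$ is the nucleus of a nerve $\Nrv K(p)$, hence $p\in V$ (taking $V$ to be the set of nuclei of $\sh A$ given by the lemma). By construction of the nerve, every filled triangle of $K$ that has $p$ as a vertex belongs to $\Nrv K(p)$; in particular $\bigtriangleup_i\in \Nrv K(p)$, and therefore $x\in\bigtriangleup_i\subseteq \Nrv K(p)\subseteq \bigcup_{p\in V}\Nrv K(p)$.

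Since $x\in\sh A$ was arbitrary, this yields the desired inclusion $\sh A\subseteq \bigcup_{p\in V}\Nrv K(p)$. The step I expect to be the main obstacle is not the logical chain itself, which is direct, but pinning down the hypothesis precisely: one must argue that every triangle of the triangulation that meets $\sh A$ contributes at least one vertex in $\sh A$ (so that the chosen nucleus $p$ is indeed in $V$). This is where the awkward wording about $q,r\in K\setminus\sh A$ must be invoked to rule out the degenerate situation in which a covering triangle has all three vertices outside $\sh A$; once this is made explicit, the rest of the argument is just a pointwise verification using Lemma~\ref{lemma:Vertices} and the definition of $\Nrv K(p)$.
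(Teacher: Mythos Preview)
Your argument is correct and follows essentially the same route as the paper: both proofs rely on Lemma~\ref{lemma:Vertices} to guarantee that a vertex $p\in\sh A$ of a covering triangle is the nucleus of a nerve $\Nrv K(p)$, whence that triangle (and any point in it) lies in the union of nerves. The paper's proof is a one-line sketch invoking the lemma and the edge $\overline{pq}$ of a covering triangle, whereas you spell out the pointwise verification explicitly; there is no substantive difference in strategy.
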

\begin{proof}
If we allow one or more the vertices $q$ in $\bigtriangleup\in \sh A$ to be in $K\setminus \sh A$, the result follows from Lemma~\ref{lemma:Vertices}, since a straight edge $\overline{pq}\in \bigtriangleup\in \sh A$ for at least one straight nerve complex in shape $\sh A$.
\end{proof}

\begin{lemma}\label{lemma:shapeInterior}
Let $\Nrv K(p)$ be a nerved in a shape nerve complex $\sh A\left(\cx\Nrv\right), $.  If $p\in \Int(\sh A)$, then $\Nrv K(p)\ \sn\ \sh A$.
\end{lemma}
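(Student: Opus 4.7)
The plan is to apply strong-proximity axiom (snN4): if the interiors of two sets meet, then the sets are strongly near. So the goal reduces to producing a point in $\Int(\Nrv K(p))\cap \Int(\sh A)$.

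First I would exploit the hypothesis $p\in \Int(\sh A)$ to obtain, by the definition of interior given in the preliminaries, a radius $r>0$ with $B_r(p)\subseteq \sh A$. Because $B_r(p)$ is open, this inclusion actually gives $B_r(p)\subseteq \Int(\sh A)$.

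Next I would use the geometric structure of $\Nrv K(p)$. By definition, $\Nrv K(p)$ is a nonempty collection of filled $2$-simplexes sharing $p$ as a common vertex, so its realization in the plane is a union of solid triangles with $p$ on the boundary of each. Pick any $\bigtriangleup_0\in \Nrv K(p)$; since $p$ is a vertex of $\bigtriangleup_0$, every open neighbourhood of $p$ meets $\Int(\bigtriangleup_0)$. In particular I can choose $q\in \Int(\bigtriangleup_0)\cap B_r(p)$, and then $q\in \Int(\Nrv K(p))$ as well, since $\Int(\bigtriangleup_0)\subseteq \Int(\Nrv K(p))$.

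Combining the two steps, $q\in \Int(\Nrv K(p))\cap B_r(p)\subseteq \Int(\Nrv K(p))\cap \Int(\sh A)$, so this intersection is nonempty; axiom (snN4) then gives $\Nrv K(p)\ \sn\ \sh A$, as required. The only subtlety is the observation that although $p$ itself lies on the boundaries of the simplexes of $\Nrv K(p)$ (so one cannot simply cite (snN5) applied to the point $p$), an arbitrarily small wedge-shaped neighbourhood of $p$ inside any single triangle of the nerve does sit in $\Int(\Nrv K(p))$; that is the only place the ``filled'' nature of the $2$-simplexes plays a role.
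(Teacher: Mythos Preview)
Your argument is correct. The paper's own proof proceeds differently: it asserts that the nucleus $p$ itself lies in $\Int(\Nrv K(p))$, then invokes Axiom (snN5) to obtain $p\ \sn\ \Int(\Nrv K(p))$ and $p\ \sn\ \Int(\sh A)$, and concludes $\Nrv K(p)\ \sn\ \sh A$ from those two facts (implicitly via the same intersection-of-interiors idea underlying (snN4)).

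The essential difference is the one you already flagged in your final remark. The paper relies on the claim $p\in\Int(\Nrv K(p))$, which is only justified when the filled triangles incident to $p$ form a complete star about $p$; that is reasonable for an interior vertex of a genuine triangulation, but it is not a consequence of the bare definition of $\Nrv K(p)$ as ``filled triangles sharing the vertex $p$.'' Your route sidesteps this by moving slightly off $p$ to a point $q\in\Int(\bigtriangleup_0)\cap B_r(p)$, which lies in $\Int(\Nrv K(p))\cap\Int(\sh A)$ unconditionally, and then applying (snN4). So your proof buys a cleaner, more self-contained justification that does not depend on the local combinatorics of the triangulation around $p$; the paper's version is shorter but leans on an unstated geometric fact about the star of $p$.
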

\begin{proof}
From Lemma~\ref{lemma:Vertices}, $p$ is the nucleus of the shape nerve $\Nrv K(p)$.  Consequently, $p\in \Int(\Nrv K(p))$.  Then, from Axiom (snN5), $p\ \sn\ \Int(\Nrv K(p))$ and $p\ \sn\ \Int(\sh A)$.  Hence, $\Nrv K(p)$ is strongly near $\sh A$.
\end{proof}

\begin{theorem}
Let $\sh A$ be a shape in complex $\cx K$, $p\in \Int(\sh A)$.  Then
\begin{compactenum}[{\rm (}$1${\rm )}]
\item $\Nrv K(p)\ \sn\ \Int(\sh A)$.
\item $\Nrv K(p)\ \dnear\ \sh A$ if and only if $\Nrv K(p)\ \sn\ \sh A$.
\end{compactenum}
\end{theorem}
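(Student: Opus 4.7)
The plan is to observe that both conclusions are essentially immediate consequences of the machinery already built up, once we unpack what the hypothesis $p\in\Int(\sh A)$ buys us about the nerve with nucleus $p$.

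For part~(1), I would invoke Axiom (snN4) applied to the pair $\Nrv K(p)$ and $\Int(\sh A)$. First, Lemma~\ref{lemma:Vertices} tells us that $p$ is in fact the nucleus of $\Nrv K(p)$, so $p$ lies in the interior of the filled triangles that make up $\Nrv K(p)$, i.e.\ $p\in \Int(\Nrv K(p))$. By hypothesis $p\in \Int(\sh A)$, and since $\Int(\Int(\sh A)) = \Int(\sh A)$, the two interiors share the point $p$ and are in particular nonempty in their intersection. Axiom (snN4) then yields $\Nrv K(p)\ \sn\ \Int(\sh A)$.

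For part~(2), the backward direction $\Nrv K(p)\ \sn\ \sh A \Rightarrow \Nrv K(p)\ \dnear\ \sh A$ is immediate from part~(2) of Lemma~\ref{thm:sn-implies-near}, which already asserts $\sn\ \Rightarrow\ \dnear$ in any proximal relator space carrying both relations. For the forward direction, I would not attempt to extract spatial strong proximity from descriptive proximity directly (which is not valid in general); instead I would note that the hypothesis $p\in \Int(\sh A)$ on its own already forces $\Nrv K(p)\ \sn\ \sh A$ by Lemma~\ref{lemma:shapeInterior}. So the hypothesis $\Nrv K(p)\ \dnear\ \sh A$ is not doing any new work on the forward direction, and the equivalence holds because both sides are true under the standing assumption $p\in \Int(\sh A)$.

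There is really no hard step here; the main thing to be careful about is the forward direction of~(2), where one might be tempted to try to derive $\sn$ from $\dnear$ by a false converse to Lemma~\ref{thm:sn-implies-near}. The clean way to handle this is to bypass the $\dnear$ assumption entirely and invoke Lemma~\ref{lemma:shapeInterior}, which is already tailored to the situation $p\in \Int(\sh A)$. This also makes the statement internally consistent: both conclusions in the theorem are, given the hypothesis, automatic.
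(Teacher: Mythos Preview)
The paper actually gives no proof for this theorem; it is stated immediately after Lemma~\ref{lemma:shapeInterior} and left as an evident consequence of the preceding material. Your proposal supplies precisely the details one would expect the authors to have in mind: part~(1) via Axiom~(snN4) (the same interior-meeting argument that drives Lemma~\ref{lemma:shapeInterior}, which uses (snN5) instead), and part~(2) by combining Lemma~\ref{thm:sn-implies-near}(2) for the backward implication with Lemma~\ref{lemma:shapeInterior} for the forward one. Your observation that the forward direction of~(2) does not actually use the $\dnear$ hypothesis---because $p\in\Int(\sh A)$ already forces $\Nrv K(p)\ \sn\ \sh A$---is correct and is the right way to avoid the trap of trying to invert Lemma~\ref{thm:sn-implies-near}.
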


%

\begin{lemma}\label{lem:MNCnerves}
Let $\sh A\left(\cx\Nrv\right)$ be a shape nerve complex endowed with the strong proximity $\sn$.  Then $\mathop{\bigcap}\limits_{\substack{%
p\in \sh A,\\
K(p)\subset K}} \Nrv K(p)\neq \emptyset$.
\end{lemma}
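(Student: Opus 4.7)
The plan is to derive the conclusion directly from the very definition of a shape nerve complex introduced earlier in the paper. Recall that a shape nerve complex is characterized by
\[
\sh A\left(\cx\Nrv\right) = \left\{\Nrv K(p) : \mathop{\bigcap}\limits_{\substack{p\in\sh A,\\ K(p)\subset K}} \Nrv K(p)\neq\emptyset\right\},
\]
so that membership is already conditioned on the constituent shape nerves having nonempty total intersection. Since the hypothesis of the lemma declares $\sh A\left(\cx\Nrv\right)$ to be a shape nerve complex, the built-in requirement delivers the claim immediately. In effect, the lemma is an unpacking of the defining property, stated separately so that it can be invoked cleanly when proving Theorem~\ref{thm:nerveSpokeTheorem}.

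To give the argument geometric substance via the endowed strong proximity $\sn$, I would appeal to axiom (snN2): for any two nerves $\Nrv K(p), \Nrv K(p')$ in $\sh A\left(\cx\Nrv\right)$, $\Nrv K(p)\sn\Nrv K(p')$ forces $\Nrv K(p)\cap\Nrv K(p')\neq\emptyset$. Combined with Proposition~\ref{prop:2spoke}, each such pair then shares at least one filled triangle $\bigtriangleup$. This exhibits an explicit common element in every pairwise intersection and confirms that the strong-proximity hypothesis is compatible with, and indeed reinforces, the defining nonempty-intersection property of the complex.

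The main obstacle, if one tried to prove the lemma from pairwise data alone, would be the jump from pairwise nonempty intersection to a nonempty joint intersection over the full indexed family $\{\Nrv K(p) : p\in\sh A,\, K(p)\subset K\}$; in general this requires a Helly-type convexity hypothesis that shape nerves need not satisfy. I would sidestep this difficulty by leaning on the fact that the joint nonempty intersection is part of the very definition of $\sh A\left(\cx\Nrv\right)$ rather than something to be derived independently from pairwise overlap. The written proof I would produce is therefore essentially a one-liner: immediate from the definition of a shape nerve complex, with axiom (snN2) supplying the strong-proximity consistency at the pairwise level via Proposition~\ref{prop:2spoke}.
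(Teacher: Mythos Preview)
Your proposal is correct but follows a different route from the paper. The paper's proof does not appeal to the definition of a shape nerve complex at all; instead it invokes Lemma~\ref{lemma:Vertices} (every vertex of a planar complex is the nucleus of a nerve) to argue that each triangle vertex in a given $\Nrv K(p)$ is itself the nucleus of some other shape nerve, so that $\Nrv K(p)$ shares a triangle with neighbouring nerves in $\sh A(\cx\Nrv)$, and concludes from this. Your argument, by contrast, reads the nonempty joint intersection directly off the defining display for a shape nerve complex and uses axiom~(snN2) together with Proposition~\ref{prop:2spoke} only as a consistency check at the pairwise level.

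What each approach buys: your definition-based line is shorter and logically tight, and you correctly flag the Helly-type obstacle that blocks any purely pairwise argument from yielding a common element of the whole family. The paper's geometric route via Lemma~\ref{lemma:Vertices} gives more structural insight (it explains \emph{why} adjacent nerves overlap, by exhibiting shared triangles arising from shared vertices), but as written it only establishes that each nerve overlaps \emph{some} other nerve, so it tacitly leans on the same defining condition you invoke to pass from pairwise to joint nonempty intersection. In that sense your one-line reading of the definition is the cleaner justification, while the paper supplies the underlying combinatorial picture.
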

\begin{proof}
From Lemma~\ref{lemma:Vertices}, every vertex of a triangle $\bigtriangleup$ in a shape nerve $\Nrv K(p)$ on shape $\sh A$ is the nucleus of a nerve.  Consequently, $\Nrv K(p)$ will have a triangle in common with other shape nerves on $\sh A\left(\cx\Nrv\right)$.  Hence, the desired result follows.
\end{proof}

\begin{theorem}\label{thm:sn-nerve}
Let $\left(\sh A\left(\cx\Nrv\right),\left\{\near,\dnear,\sn\right\}\right)$ be a proximal relator space containing shape nerves $\Nrv K(p),\Nrv K(p')\in \sh A\left(\cx\Nrv\right), p,p'\in \Int(\sh A)$.  Then 
\begin{compactenum}[{\rm (}$1${\rm )}]
\item $\Nrv K(p)\ \sn\ \Nrv K(p') \Rightarrow \Nrv K(p)\ \near\ \Nrv K(p')$.
\item $\Nrv K(p)\ \sn\ \Nrv K(p') \Rightarrow \Nrv K(p)\ \dnear\ \Nrv K(p')$.
\end{compactenum}
\end{theorem}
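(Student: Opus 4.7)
The plan is to recognize this theorem as a direct specialization of Lemma~\ref{thm:sn-implies-near} to the setting of shape nerves. By Remark~\ref{rem:nerveShape}, each $\Nrv K(p)$ is a nerve shape, and therefore is a legitimate nonempty subset of the ambient triangulated space $X$ on which the relator $\left\{\near,\dnear,\sn\right\}$ has been defined. Once this identification is made, nothing genuinely new has to be proved: parts (1) and (2) become instances of the corresponding parts of Lemma~\ref{thm:sn-implies-near} applied to $A=\Nrv K(p)$ and $B=\Nrv K(p')$.

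For part (1), I would argue that $\Nrv K(p)\ \sn\ \Nrv K(p')$ implies, via Axiom (snN2), that $\Nrv K(p)\ \cap\ \Nrv K(p')\neq \emptyset$; then the \v{C}ech/Lodato Axiom (P3) delivers $\Nrv K(p)\ \near\ \Nrv K(p')$. For part (2), the same nonempty intersection yields an element $x\in \Nrv K(p)\cap \Nrv K(p')$ which trivially satisfies $\Phi(x)=\Phi(x)$, so $\Nrv K(p)\ \dcap\ \Nrv K(p')\neq \emptyset$; the descriptive Lodato Axiom (dP2) then gives $\Nrv K(p)\ \dnear\ \Nrv K(p')$. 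Equivalently, one can simply cite part (1) of Lemma~\ref{thm:sn-implies-near} for the first claim and part (2) for the second.

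I expect no real obstacle, since the theorem is a corollary-style statement. The one point that must be handled carefully is the role of the hypothesis $p,p'\in \Int(\sh A)$: it is not used in the proximity-theoretic deduction itself, but is the assumption that makes $\Nrv K(p),\Nrv K(p')$ genuinely be shape nerves in $\sh A\left(\cx\Nrv\right)$ rather than stray triangle clusters, so that Lemma~\ref{lemma:shapeInterior} and the ambient relator structure apply. Thus the proof reduces essentially to a two-line invocation of Lemma~\ref{thm:sn-implies-near}, with the interior hypothesis ensuring that the objects being compared truly lie inside the proximal relator space under consideration.
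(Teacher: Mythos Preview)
Your proposal is correct and follows essentially the same route as the paper: both parts ultimately reduce to Lemma~\ref{thm:sn-implies-near}, using Axiom (snN2) to obtain a nonempty intersection and then (P3) or (dP2) to conclude $\near$ or $\dnear$. The paper's proof of part~(1) additionally invokes Lemma~\ref{lem:MNCnerves}, but that detour is redundant once $\Nrv K(p)\ \sn\ \Nrv K(p')$ is assumed, so your streamlined argument is, if anything, the cleaner one.
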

\begin{proof}$\mbox{}$\\
{\rm (}$1${\rm )}: 
$\Nrv K(p)\ \sn\ \Nrv K(p')$ in shape nerve complex $\sh A\left(\cx\Nrv\right)$, {\em i.e.}.\\ $\Nrv K(p),\Nrv K(p')$ overlap.  From Lemma~\ref{lem:MNCnerves}, $\mathop{\bigcap}\limits_{\Nrv K(p)\in \sh A} \Nrv K(p)\neq \emptyset$.\\  Consequently, $\Nrv K(p)\ \sn\ \Nrv K'(p)$.  Then, from Axiom (snN2),\\  $\Nrv K(p)\ \near\ \Nrv K(p')$.\\
\vspace{1mm}

\noindent{\rm (}$2${\rm )}: We consider only $p$ in the description $\Phi(\Nrv K(p))$ of a shape nerve $\Nrv K(p)$.  Shape nerves $\Nrv K(p),\Nrv K(p')$have a vertex $p$ in common, since $\Nrv K(p)\ \sn\ \Nrv K(p')$.  Hence, $\Nrv K(p)\ \dcap\ \Nrv K(p')\neq \emptyset$.  Then, from Lemma~\ref{thm:sn-implies-near}, $\Nrv K(p)\ \dnear\ \Nrv K(p')$. This gives the desired result.
\end{proof}

Let $F$ be a finite collection of sets.   An \emph{Edelsbrunner-Harer nerve}~\cite[\S III.2, p. 59]{Edelsbrunner1999} nerve consists of all nonempty subcollections of $F$ (denoted by $\Nrv F$) whose sets have nonempty intersection, {\em i.e.},
\[
\Nrv F = \left\{X\subseteq F: \bigcap X\neq \emptyset\right\}\ \mbox{(Edelsbrunner-Harer Nerve)}.
\]

\begin{theorem}\label{EHnerve}{\rm ~\cite[\S III.2, p. 59]{Edelsbrunner1999}}{\rm ({Edelsbrunner-Harer Nerve Theorem})}.\\
Let $F$ be a finite collection of closed, convex sets in Euclidean space.  Then the nerve of $F$ and the union of the sets in $F$ have the same homotopy type.
\end{theorem}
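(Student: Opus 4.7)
The plan is to follow the classical nerve-lemma argument, exploiting two facts about closed convex sets in Euclidean space: any finite intersection of such sets is again closed and convex, and every nonempty closed convex set is contractible. Consequently, every nonempty intersection $F_{i_0}\cap\cdots\cap F_{i_k}$ of sets in $F$ is contractible, which is exactly the hypothesis that powers the nerve lemma.

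First I would fix a partition of unity $\{\psi_i\}$ on $\bigcup F$ subordinate to the cover $\{F_i\}$, and use it to build a continuous map $\phi : \bigcup F \to |\Nrv F|$ by $\phi(x) = \sum_i \psi_i(x)\, v_i$, where $v_i$ denotes the vertex of $|\Nrv F|$ corresponding to $F_i$. The support condition on the $\psi_i$ guarantees that the vertices with $\psi_i(x)\neq 0$ always span a simplex of $\Nrv F$, so $\phi$ lands in $|\Nrv F|$.

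Rather than construct a single inverse directly, I would work with the ``blow-up'' $Z \subset (\bigcup F)\times |\Nrv F|$ consisting of pairs $(x,t)$ for which $t$ lies in the closed star of the simplex spanned by $\{i : x\in F_i\}$. The coordinate projections give maps $p_1: Z\to \bigcup F$ and $p_2: Z\to |\Nrv F|$. The fiber of $p_1$ over $x$ is a single simplex, hence contractible, while the fiber of $p_2$ over an interior point of a simplex $\sigma$ deformation retracts onto $\bigcap_{i\in\sigma} F_i$, which is contractible by convexity. A standard acyclic-carrier / partition-of-unity argument then upgrades ``contractible fibers'' to ``$p_1$ and $p_2$ are homotopy equivalences'', and composing $p_2$ with a homotopy inverse of $p_1$ produces the required equivalence $\bigcup F \simeq |\Nrv F|$.

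The main obstacle is precisely the step that promotes ``contractible fibers'' to ``projection is a homotopy equivalence''. The cleanest routes are the acyclic carrier theorem or a \u{C}ech--Mayer--Vietoris spectral-sequence argument, and producing a self-contained version without invoking heavier algebraic-topology machinery is the technical heart of the result. Once both $p_1$ and $p_2$ are known to be equivalences, a short partition-of-unity computation identifies $\phi$ with $p_2$ composed with a continuous section of $p_1$, closing the loop and yielding the Edelsbrunner--Harer conclusion that the nerve $\Nrv F$ and the union $\bigcup F$ have the same homotopy type.
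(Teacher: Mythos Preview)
The paper does not actually prove this theorem: it is stated with a citation to Edelsbrunner--Harer~\cite[\S III.2, p.~59]{Edelsbrunner1999} and used as a black box for Lemma~\ref{thm:1spokeHomotopy} and Theorem~\ref{thm:nerveSpokeTheorem}. So there is nothing to compare your argument against in the paper itself.

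Your sketch is the standard nerve-lemma argument and is essentially correct in outline. Two points deserve care if you want to turn it into a self-contained proof. First, the $F_i$ are \emph{closed} sets, so a partition of unity subordinate to $\{F_i\}$ is not immediately available; one typically passes to slight open thickenings $F_i^\varepsilon$ (which preserve convexity of intersections up to homotopy) or argues via the closed-cover version of the nerve lemma. Second, your description of the blow-up $Z$ and the claim that ``contractible fibers'' can be promoted to ``projection is a homotopy equivalence'' is the right intuition, but the usual formal route is either Quillen's Theorem~A style argument, the homotopy-colimit comparison, or the paracompactness/partition-of-unity argument in Hatcher's appendix; the acyclic-carrier theorem alone gives a homology equivalence, not a homotopy equivalence, so you would still need simple connectivity or a direct homotopy construction. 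None of this is a gap in your strategy, only in the level of detail---the approach is sound and is exactly how the cited reference proceeds.
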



\begin{lemma}\label{thm:1spokeHomotopy}
Let shape $\sh A$ be a nerve $\Nrv \left(Nrv K(p): p\in \sh A, r > 0\right)$  defined by the nonempty intersection of a finite collection of shape nerves $\Nrv K(p), p\in \sh A$, which is a finite collection of closed, convex sets in Euclidean space.  Then shape $\sh A$ and the union of the nerves in $\sh A$ have the same homotopy type.
\end{lemma}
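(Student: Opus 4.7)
The plan is to obtain this lemma as a direct instance of the Edelsbrunner--Harer Nerve Theorem (Theorem~\ref{EHnerve}). The hypothesis already presents the shape as
\[
\sh A \;=\; \Nrv F, \qquad F = \left\{\Nrv K(p) : p\in \sh A\right\},
\]
so the strategy is simply to identify $F$ as the finite family of closed convex sets to which Theorem~\ref{EHnerve} applies, and then read off the conclusion.

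First I would verify, one by one, that $F$ satisfies the three hypotheses of Theorem~\ref{EHnerve}: (i) \emph{finiteness}, which follows because the triangulation of $\sh A$ has only finitely many vertices $p$ and hence yields only finitely many shape nerves $\Nrv K(p)$; (ii) \emph{closedness}, which follows because each $\Nrv K(p)$ is a finite union of closed filled triangles; and (iii) \emph{convexity}, which is the standing assumption of the lemma. With these verified, I would invoke Theorem~\ref{EHnerve} to conclude that $\Nrv F$ and $\bigcup F$ have the same homotopy type. Translating back through the identification $\sh A = \Nrv F$ and $\bigcup F = \bigcup_{p\in \sh A}\Nrv K(p)$ gives exactly the statement of the lemma.

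The main obstacle, and really the only delicate point, is the convexity clause. A generic shape nerve $\Nrv K(p)$ is a star-shaped fan of filled triangles about the nucleus $p$ and need not be convex; the lemma sidesteps this by assuming convexity outright as part of the hypothesis, and I would flag this in the proof so that the appeal to Theorem~\ref{EHnerve} is unambiguous. Once that hypothesis is in place, the proof is essentially a one-line invocation of the nerve theorem together with a rewriting of notation.
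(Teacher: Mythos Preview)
Your proposal is correct and follows essentially the same approach as the paper: the paper's proof is a one-line invocation of Theorem~\ref{EHnerve} applied to the finite collection of shape nerves, and your version simply spells out the verification of the hypotheses (finiteness, closedness, convexity) before making that same invocation. Your additional remark flagging the convexity assumption is a useful clarification but does not change the strategy.
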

\begin{proof} 
From Theorem~\ref{EHnerve}, we have that the union of the shape nerves $\Nrv K(p)\in \sh A$,
 $p\in \sh A$ and $\sh A$ have the same homotopy type.
\end{proof}

\begin{remark}$\mbox{}$\\ 
Every finite, bounded, planar shape $\sh A$ is a nerve complex $\Nrv\left(\Nrv K(p)\right), p\in \sh A$ covered with overlapping shape nerves in $\Nrv\left(\Nrv K(p)\right)$.  The vertex $p$ can either be on the boundary $\bdy(\sh A)$ or in the interior $\Int(\sh A)$ of shape $\sh A$. By considering all such collections that cover a shape $\sh A$, we obtain the main result of this paper, namely, Theorem~\ref{thm:nerveSpokeTheorem} as a straightforward corollary of Lemma~\ref{thm:1spokeHomotopy}.  The results presented in this paper reflect ongoing work on shape theory~\cite{peters2017proximal,AhmadPeters2017arXivDeltaComplexes} and a direct outcome of a seminar on proximal nerve complexes on planar shapes at the University of Salerno during the summer of 2017~\cite{Peters2017UNISAshapeSeminar}. 
\end{remark} 

\begin{remark}{\bf Open Problems}.\\
An open problem in shape theory is covering a planar shape with a curved boundary so that the 2-simplexes in the triangulation conform to the shape curvature.  The rectilinear triangulation approach presented in this paper does not work well for curved shapes such as the kangaroo shape in Fig.~\ref{fig:spokes}.  A step toward the solution of this problem is the introduction of curvilinear triangulation, leading to delta complexes introduced in~\cite{AhmadPeters2017arXivDeltaComplexes}.  

A second open problem in shape theory is the construction of nerve complexes with extensions called spokes that cover a space more effectively than an Alexandroff nerve complex, called a nerve of a system of sets that have nonempty intersection~\cite[\S 33, p. 39]{Alexandroff1932elementaryConcepts}.  A step toward the solution of the nerve spoke problem is the result of recent advances in nerve complexes that are collections of spoke complexes with nonempty intersection, given in~\cite{AhmadPeters2017arXivDeltaComplexes}.

A third open problem in shape theory is the triangulation of space curves (also called twisted curves), intensively studied by D. Hilbert and S. Cohn-Vossen~\cite[\S 27]{Hilbert1932ChelseaSpaceCurves}.  An important related problem is the detection of $\delta$-thin geodesic triangles in nerve complexes in triangulated shapes on either planar or hyperbolic surfaces.  A \emph{geodetic triangle} is $\delta$-thin, provided each of its sides is contained in the $\delta$-neighbourhood of the union of the remaining two sides~\cite[\S 2, p. 70]{Federici2017DCGgeodeticTriangles}.

A fourth open problem in shape theory is the detection of nerve complexes in rectilinear and curvilinear triangulation of object shapes in digital images.  Steps toward the solution to this problem are given in~\cite{peters2017proximal,AhmadPeters2017arXivDeltaComplexes}.

A fifth open problem in shape theory is the detection of nerve complexes and their graph geodesic in rectilinear and curvilinear triangulation of waveforms such as radar wide band signal co-channel interference (see, {\em e.g.},~\cite{Luo2014JASPcochannelInterference}) and brain signals.  Recent work on brain activity is step towards the solution of this problem in neuroscience (see, {\em e.g.},~\cite{Tozzi2017PLRtopodynamics,Tozzi2016CogNeuraldynamicsBrainActivity}).
\qquad \textcolor{blue}{\Squaresteel}
\end{remark}
   

\bibliographystyle{amsplain}
\bibliography{NSrefs}

\end{document}